\theoremstyle{plain}
\newtheorem{theorem}{Theorem}[section]
\newtheorem{lemma}[theorem]{Lemma}
\newtheorem{proposition}[theorem]{Proposition}
\newtheorem{corollary}[theorem]{Corollary}
\theoremstyle{definition}
\newtheorem{example}[theorem]{Example}
\theoremstyle{remark}
\numberwithin{equation}{section}
\definecolor{darkgreen}{rgb}{.1,.5,0}
\theoremstyle{plain}
\theoremstyle{definition}
\newtheorem*{Sketch of proof}{Sketch of proof}
\numberwithin{equation}{section}
\begin{document}
\setcounter{page}{1}
\title[Extension of $m$-isometric  weighted composition operators on directed graphs\textrm{~} ]{ Extension of $m$-isometric  weighted composition operators on directed graphs }

\author[V. Devadas, E. Shine Lal   and  T. Prasad ]{V. Devadas, E. Shine Lal   and  T. Prasad}

\address{V. Devadas \endgraf
		Department of Mathematics, Sree Narayana College,  Alathur, 
		Affiliated to University of Calicut,
		Kerala, 
		India -678682}

	\email{\textcolor[rgb]{0.00,0.00,0.84}{ v.devadas.v@gmail.com}}
\address{E. Shine Lal  \endgraf
		Department of Mathematics, University College, Thiruvananthapuram,
		Kerala, 
		India- 695034.}

	\email{\textcolor[rgb]{0.00,0.00,0.84}{ shinelal.e@gmail.com}}

\address{T. Prasad\endgraf
		Department of Mathematics, 
		University of Calicut,
		Kerala-673635, 
		India.}
	
	\email{\textcolor[rgb]{0.00,0.00,0.84}{ prasadvalapil@gmail.com}}

\subjclass[2020]{Primary 47B33; Secondary  47B 20,  47B38}
\keywords{ $k$-quasi-$m$- isometric operator,  composition operator, weighted composition operator, conditional expectation}

\begin{abstract}
In this paper, we discuss $k$-quasi-$m$-isometric  composition operators  and weighted composition operators  on  directed graphs with one circuit and more than one branching vertex.
\end{abstract}
\maketitle

\section{Introduction and Preliminaries}
Let  $B(\mathcal{H}) $ denote the algebra of all bounded linear operators  on  a complex Hilbert space $\mathcal{H}$. The symbols $\mathbb{N},\mathbb{Z_+},\mathbb{Z},\mathbb{R},$ and $\mathbb{C}$ stand for the set of  natural numbers, nonnegative integers, integers, real numbers and complex numbers respectively. For $T\in B(\mathcal{H})$ and for  $m\in \mathbb{Z_+} $,  define
 $$ \mathcal{B}_m(T)=\sum^{m}_{j=0} (-1)^{j}\begin{pmatrix} m \\
	j \end{pmatrix}T^{* (m-j)}T^{(m-j)},$$  where $T^*$ stands for adjoint of $T$ and 
 $\begin{pmatrix} m \\
 j \end{pmatrix}$
 the binomial coefficient.  For $m\in\mathbb{N}$, an operator  $T\in B(\mathcal{H})$ is said to be  $m$-isometric if $ \mathcal{B}_m(T)=0$  \cite{as1,as2,as3}. For $k,m \in \mathbb{N}$, an operator $T \in B(H)$ is said to be   $k$-quasi-$m$-isometric if 
 $ T^{*k}\mathcal{B}_m(T)T^k=0$ \cite{Mechpra}.  Class of $m$-isometric operators and related classes  has been studied extensively  (see  \cite{m1,ZJIBJS,m2,SP,m3, PT,Rich}).

Let  $(X,  \mathcal{F}, \mu)$ be the discrete measure space, where  $X$ is a countably infinite set  and $\mu$ is  a positive measure on $ \mathcal{F} $, the $\sigma$- algebra of all subsets of $X$ such that $\mu(\{x\})\geq 0$ for every $x\in X$. A measurable function $\phi$ from $X$ into itself means $\phi^{-1}(\mathcal{F})\subset\mathcal{F}$. Note that  the measure  $\mu \circ \phi^{-1}$ on $\mathcal{F}$  is given by $\mu \circ \phi^{-1}(S)= \mu (\phi^{-1}(S)) ~~~\textrm{for all}~~~ S\in \mathcal{F} $. 
  Recall that if  $\mu \circ \phi^{-1}$ is absolutely continuous with respect to $\mu$,  we call the map $\phi$ is nonsingular. Then the Radon -Nikodym derivative of  $\mu \circ \phi^{- 1}$ with respect to $\mu$ exists and is denoted by $h$.  We know that  if $\phi$ is nonsingular, then $\phi^{p}$ is nonsingular for every  $p\in \mathbb{Z_+} $. In this case, Radon -Nikodym derivative of $\mu \circ\phi^{-p}$ with respect to $\mu$ is denoted by $h_p$. In particular $h_0=1$ and $h_1=h$.
  
   Let $L^{2}(X,  \mathcal{F}, \mu) (= L^{2}(\mu))$ be the space of all equivalence classes of square integrable complex valued functions on $X$ with respect to the measure $\mu$. Then the   composition operator $C$ on $L^{2}(\mu)$ induced by  a nonsingular measurable transformation $\phi$ on $X$ is given  by $C f= (f \circ \phi)$, $~f \in L^{2}(\mu)$.  Composition operator $C$ is bounded if and only if  the Radon -Nikodym derivative $h$ is essentially bounded. In this case $\parallel C _{\phi} \parallel^2 = \parallel h \parallel_{\infty}$ and 
   $ \parallel C^n(f) \parallel^2= \int_{S}^{} h_n|f|^2 d\mu, ~~ f\in L^{2}(\mu), n\in \mathbb{Z_+} .$
   
   Let $L^{\infty}(\mu)$ be the space of all equivalence classes of essentially bonded and  measurable complex valued functions on $X$ with respect to the measure $\mu$. If  $\pi \in  L^{\infty}(\mu)$ and $\phi$ is a nonsingular measurable transformation $\phi$ on $X$ . Then the  multiplication operator  $M$ induced by $\pi$ on $L^{2}(\mu)$ is given by
$M_{\pi} f=\pi  f,$  $~f \in L^{2}(\mu)$. The weighted composition operator $W$  on $L^{2}(\mu)$ induced by a nonsingular measurable function  $\phi$ and an  essentially bounded function $\pi$ is given by
$Wf= \pi (f \circ \phi),  ~ f \in L^{2}( \mu)$. 
Let  $\pi_{k} = \pi(\pi \circ \phi)(\pi \circ \phi^{2}).....(\pi \circ \phi^{k-1})$, $k\in \mathbb{N}$. Then we have   $W^{k}f=\pi_{k}(f \circ \phi)^{k}, ~ f\in L^{2}( \mu)$.   General properties of  composition operators has been found in  \cite{nordgren, SM }.

  If $\phi$ is a nonsingular measurable function, then $\phi^{-1}\mathcal{F}$ is a $\sigma$-subalgebra of $\mathcal{F}$ and $L^2(X,\phi^{-1}\mathcal{F},\mu)$ is a closed subspace of the Hilbert space $L^2(X,\mathcal{F},\mu) $. The conditional expectation operator associated  with $\phi^{-1}\mathcal{F}$ is an  orthogonal projection of $L^2(X,\mathcal{F},\mu)$ onto $L^2(X,\phi^{-1}\mathcal{F},\mu)$  defined for all non-negative measurable functions  $f$ on $X$ and  $f\in L^2(X,\mathcal{F},\mu) $. For each $f$ in the domain of $E$, $E(f)$ is the unique $\phi ^{-1}\mathcal{F}$ measurable  function satisfying 
  $$ \int_{S} f d\mu=\int_{S}E(f) d\mu, ~~~~\text{for all }  S\in \phi^{-1}\mathcal{F}.$$
   We denote the conditional expectation associated with $\phi^{-n}\mathcal{F}$  by $E_n$. If $\phi^{-n}\mathcal{F}$ is purely atomic $\sigma$-subalgebra of $\mathcal{F}$ generated by the atoms $\{ A_k\}_{k\geq 0}$, then
  $$ E_n(f|\phi^{-n}\mathcal{F})=\sum_{k=0}^{\infty} \frac{1}{\mu(A_k)}\displaystyle \left(\int_{A_k} fd\mu\right)\chi_{A_k}.$$
  We refer the reader to \cite{ca:ja, Herron, la, Rao} for more details on the  properties of conditional expectation.
  
The study of weighted shift operators on directed trees by Jabłoński, Jung, and Stochel\cite{JJJ}  has been a stimulation for  the  study of the  classes  of non-normal operators in the view point of composition operators and weighted shift operators on directed graph settings (see \cite{g2,g1, g3,g4,ZJJK ,g5,g6} ).  Recently,  Jabłoński and Kośmider \cite{ZJJK } characterized  $m$--isometric composition operators on directed graphs with one circuit. 
In this paper, we  characterize  $k$-quasi-$m$-isometric   composition operators on $L^2(\mu)$ with respect to the positive measure $\mu$ on directed graphs with one circuit and more than one branching vertex influenced by the treatment of  Jabłoński and Kośmider \cite{ZJJK }, and we study  $k$-quasi-$m$-isometric   weighted composition operators on $L^2(\mu)$ with respect to the positive measure $\mu$ on directed graphs with one circuit and more than one branching vertex.


\section{$k$-quasi-$m$-isometric  composition  and weighted composition operators} \label{sn2} 
 Let   $J_\kappa=\{1,2,\ldots,\kappa\}$,  $\kappa\in \mathbb{N}$ and let $\eta_r\in\mathbb{Z_+\cup\{\infty\}}$, $r\in J_\kappa$.  Suppose that  at least one of $\eta_r$ is non-zero for $r\in J_\kappa$ and

\begin{equation} \label{eqn1}
	\nonumber X=\{x_1,x_2,\ldots,x_k\}\cup\bigcup_{r=1}^{\kappa}\bigcup_{i=1}^{\eta_r} \{x^r_{i,j}: j\in\mathbb{N}\},
\end{equation}
where 
$X_\kappa=\{x_1,x_2,\ldots,x_k\}$ and $X_{\eta_r}=\bigcup_{i=1}^{\eta_r} \{x^r_{i,j}: j\in\mathbb{N}\}$ ($r\in J_\kappa$)  are  disjoint sets of distinct points of $X$.  Throughout this section we consider $X$ as a  directed graph with one circuit $\{x_1,x_2,\ldots,x_k\}$, the set of  branching vertices in the one-circuit   
and  $X_{\eta_r}$,  the set of branching elements for $r\in J_\kappa$ where $\{x^r_{i,j}: j\in\mathbb{N}\}$ is the set of all vertices in the $i^{th} $ branch of  $x_r$ for $i\in J_{\eta_r}$ and  $\eta_r$ is the number of branches originating  from the vertex  $x_r$.  Recently,  a general version of  this type of  graph  has been considered by  Buchała\cite{MB} . The following figure 1 represent the  above discussed graph for the case $\kappa = 3$ and $\eta_{r}=2, r\in J_{\kappa}$.
\begin{figure}[h]
	\centering
	\includegraphics[width=1.1\textwidth]{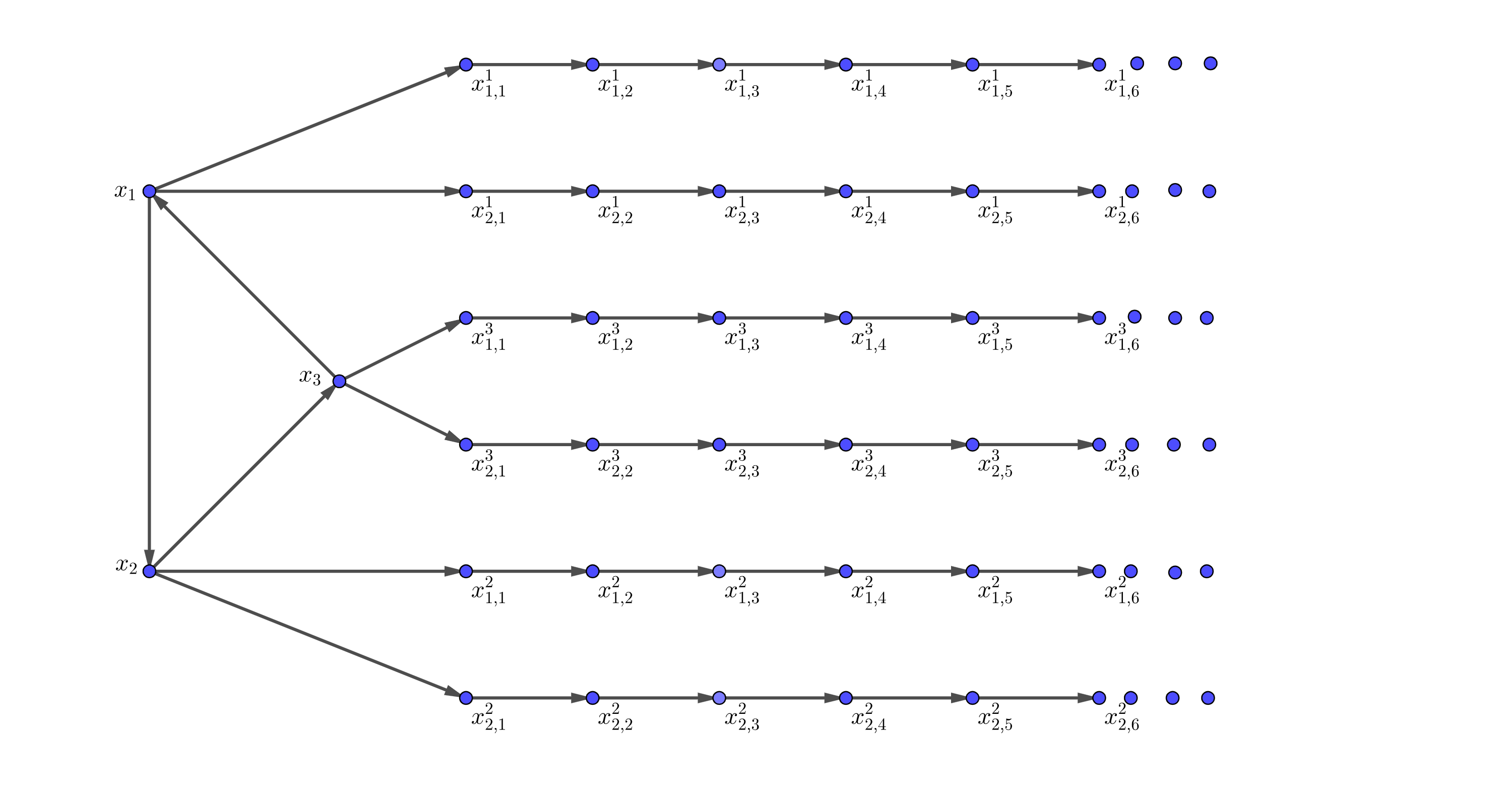} 
	\caption{Directed graph with one circuit and more than one branching vertex}
\end{figure}

\label{fig:Directed graph with one circuit}
Consider $(X, \mathcal{F}, \mu)$ as a $\sigma$-finite measure space, where $\mu$ is a $\sigma$-finite positive measure on $X$ with $\mu({x}) > 0$ for every $x \in X$. We will use  the following functions $\Phi_1$ and $\Phi_2$ to define the parent function on $(X, \mathcal{F}, \mu)$, which will assist for  determine  the atoms of the $\sigma$-algebra $\phi^{-p}(\mathcal{F})$ within $\mathcal{F}$. Let $\kappa\in \mathbb{N}$, and let $\Phi_1: \mathbb{Z}\rightarrow \mathbb{Z}$ and $\Phi_2: \mathbb{Z}\rightarrow J_\kappa$ be two uniquely determined functions defined by 
	$p=\Phi_1(p)\kappa+ \Phi_2(p),~~p\in\mathbb{Z}$. These functions satisfies  the conditions:
	$\Phi_1(l\kappa+1)=\Phi_1(l\kappa+r), ~~~~~l\in \mathbb{Z},~~r\in J_\kappa$, and 
	$\Phi_2(l\kappa+r_1+r_2)=\Phi_2(l\kappa+r_1)+r_2, ~~~~~l\in \mathbb{Z},$ for
	$r_1\in \mathbb{N}, ~~r_2\in \mathbb{Z_+},~~r_1+r_2\in J_\kappa.$ From the above  directed graph, we obtain the parent function as follows:
	
\begin{align*}
par(x)= \left\{
\begin{array}{ll}\
	x^r_{i,j} & \mathrm{if}~~ x=x^r_{i,j+1} ~~~\mathrm{for}~~r\in J_\kappa~~~ i\in J_{\eta_r},~~ \mathrm{and}~~j\in \mathbb{N} ,\\\\
	x_r & \mathrm{if}~~ x=x^s_{i,j} ~~~\mathrm{for}~~ s\in J_\kappa,  \mathrm{and}~~ \Phi_2(1+r)=\Phi_2(s+j),~~  j\in J_1, i\in J_{\eta_s},\\
	 & ~~ \mathrm{or}~~x=x_{\Phi_2(1+r)}.
\end{array}\right. \end{align*}
\label{eqn1}

\begin{align}\label{equ1} 
		\textrm{ Assume that }(X, \mathcal{F}, \mu)\textrm{ is a discrete measure space where X is a directed }~~ \nonumber \\ \textrm{  graph with one ~~circuit and~~ more than ~~one branching vertex as discussed~~~ above  } ~~ \nonumber \\ 
		\textrm{and~~}\phi~~\textrm{ is a~~measurable~~ transformation~~   on}~~ X		   
	\textrm{~~defined~~~ by~~ ~} \phi(x)=par(x),~~~~~~x\in X 
\end{align}

From the functions $\Phi_{1}$ and $\Phi_{2}$ discussed earlier, we  derive the general expression for the $p$-fold of $\phi$ as follows:\\
$$\phi^p(x)= \left\{
\begin{array}{ll}\label{AS}
	x^r_{i,j} & \mathrm{if}~~ x=x^r_{i,j+p} ~~~\mathrm{for}~~r\in J_\kappa~~~ i\in J_{\eta_r},~~ \mathrm{and}~~j\in \mathbb{N} ,\\\\
	x_r& \mathrm{if}~~ x=x^s_{i,j} ~~~\mathrm{for}~~ s\in J_\kappa,  \mathrm{and}~~ \Phi_2(p+r)=\Phi_2(s+j),~~  j\in J_p, i\in J_{\eta_s},\\
	& ~~ \mathrm{or}~~x=x_{\Phi_2(p+r)}.\\\\
\end{array}\right.$$

Hence, the atoms of the $\sigma$-algebra $\phi^{-p}(\mathcal{F})$ within $\mathcal{F}$ can be determined as follows:

$$\phi^{-p}(\{x\})= \left\{
\begin{array}{ll}
	\{x^r_{i,j+p}\}& \mathrm{if}~~ x=x^r_{i,j} ~~ r\in J_\kappa , \\
	& ~~i\in J_{\eta_r},~~j\in \mathbb{N},\\\\
	\{x_{{\Phi_2}_(p+r)}\}\cup\bigcup_{j=1}^p\bigcup_{s=1,   \Phi_2(p+r)=\Phi_2(s+j)}^\kappa\bigcup_{i=1}^{\eta_s}
	 \{x^s_{i, j}\} &\mathrm{if}~~ x=x_r,~~r\in J_\kappa .\\\\
\end{array}\right.$$

Given that $\mu(x)>0$ for every $x \in X$ and the transformation $\phi$ is nonsingular. Cconsequently, $\phi^p$ is also nonsingular for $p \in \mathbb{N}$. Therefore, the Radon-Nikodym derivative $h_p = \frac{d(\mu \circ \phi^{-p})}{d\mu}$ can be determined using the atoms of the $\sigma$-algebra $\phi^{-p}(\mathcal{F})$ as follows:
$$h_p(x)= \left\{
\begin{array}{ll}
	\frac{\mu(x^r_{i,j+p})}{\mu(x^r_{i, j})} & \mathrm{if}~~ x=x^r_{i,j},  ~~ r\in J_\kappa,  ~~  i\in J_{\eta_r}, \\
	 & ~~j\in \mathbb{N},\\\\
\frac{\mu(x_{{\Phi_2}_(p+r)})+\Sigma_{j=1}^p\Sigma_{s=1, \Phi_2(p+r)=\Phi_2(s+j) }^{\kappa}
	\Sigma _{i=1}^{\eta_{s}} \mu(x^s_{i,j})}{\mu(\{x_r\})}	 &\mathrm{if}~~ x=x_r,~~r\in J_\kappa .\\\\
\end{array}\right.$$


Recall the following result by  Jabłoński, Jung, and Stochel\cite{ZJIBJS}. 
Consider $\mathbb{R^{Z_+}}$ as the space of all real-valued sequences indexed by $\mathbb{Z_+}$, and $\mathbb{R}[x]$ as the ring of polynomials in $x$ with real coefficients. A sequence $\gamma = \{\gamma_n\}_{n=0}^{\infty}$ in $\mathbb{R^{Z+}}$ is said to be a polynomial of degree $k \in \mathbb{Z_+}$ if there exists a polynomial $p(x) \in \mathbb{R}[x]$ of degree $k$ such that $p(n) = \gamma_n$ for all $n \in \mathbb{Z_+}$.  
For $~~~m,n\in\mathbb{Z_+},\gamma=\{\gamma_n\}_{n=0}^{\infty}\in \mathbb{R^{Z_+}},$  define  an operator  $\Delta$  on $\mathbb{R^{Z_+}}$ by
$(\Delta\gamma)_n= \gamma_{n+1}-\gamma_n$. Then,  $(\Delta^m\gamma)_n= (-1)^m \sum^{m}_{k=0} (-1)^{k}\begin{pmatrix} m \\
	k \end{pmatrix}\gamma_{n+k}$ (\cite{ZJIBJS}).
\begin{lemma}\label{l1}(\cite{ZJIBJS}) \label{l1} Let $m\in\mathbb{N}$ and  $\gamma=\{\gamma_n\}_{n=0}^{\infty}\in \mathbb{R^{Z_+}}$. Then the following are equivalent:\\\\
(i) $\Delta^m\gamma=0,$\\
(ii) $\sum^{m}_{j=0} (-1)^{j}\begin{pmatrix} m \\\\
	j \end{pmatrix}\gamma_{n+j}=0~~~\mathrm{for}~~~n\in\mathbb{Z_+} ,$ \\
(iii) $\gamma_n$ is a polynomial in $n$ of degree at most $m-1$.
\end{lemma}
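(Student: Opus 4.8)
The plan is to dispose of the trivial equivalence first and then treat the one substantive implication through the algebra of the forward shift. The equivalence (i)$\Leftrightarrow$(ii) requires no work: by the formula recalled immediately before the statement, $(\Delta^m\gamma)_n=(-1)^m\sum_{k=0}^m(-1)^k\binom{m}{k}\gamma_{n+k}$, so the vanishing of the entire sequence $\Delta^m\gamma$ (condition (i)) is literally the vanishing of $\sum_{j=0}^m(-1)^j\binom{m}{j}\gamma_{n+j}$ for every $n\in\mathbb{Z}_+$ (condition (ii)), after renaming the summation index and discarding the nonzero factor $(-1)^m$.

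For (iii)$\Rightarrow$(i) I would use that $\Delta$ strictly lowers polynomial degree. If $\gamma_n=p(n)$ with $\deg p\le m-1$, then $(\Delta\gamma)_n=p(n+1)-p(n)$ is again given by a polynomial in $n$ whose degree is strictly smaller, since the leading terms cancel. A short induction on the degree then shows that $\Delta^m$ annihilates every sequence of the form $\{p(n)\}$ with $\deg p\le m-1$, that is, $\Delta^m\gamma=0$.

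The step carrying the real content is (i)$\Rightarrow$(iii), and here the plan is to invoke Newton's forward-difference expansion. Introduce the shift $E$ on $\mathbb{R}^{\mathbb{Z}_+}$ by $(E\gamma)_n=\gamma_{n+1}$, so that $\Delta=E-I$ and hence $E=I+\Delta$; since $I$ and $\Delta$ commute, the binomial theorem for operators yields $\gamma_n=(E^n\gamma)_0=\big((I+\Delta)^n\gamma\big)_0=\sum_{k=0}^n\binom{n}{k}(\Delta^k\gamma)_0$. The hypothesis $\Delta^m\gamma=0$ then forces $\Delta^k\gamma=\Delta^{k-m}(\Delta^m\gamma)=0$ for every $k\ge m$, so the sum truncates to $\gamma_n=\sum_{k=0}^{m-1}\binom{n}{k}(\Delta^k\gamma)_0$. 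Each coefficient $(\Delta^k\gamma)_0$ is a constant and each $\binom{n}{k}=\frac{n(n-1)\cdots(n-k+1)}{k!}$ is a polynomial in $n$ of degree $k\le m-1$, whence $\gamma_n$ is a polynomial in $n$ of degree at most $m-1$, which is (iii).

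I expect the only delicate point to be the clean derivation of the Newton expansion: one must apply $(I+\Delta)^n$ at the level of the whole sequence before evaluating at index $0$, and one must observe that the iterated differences $\Delta^k\gamma$ vanish for \emph{all} $k\ge m$, not merely for $k=m$, in order to truncate the expansion. Everything else is routine bookkeeping with binomial coefficients.
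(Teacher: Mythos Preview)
Your argument is correct in all three parts: the equivalence (i)$\Leftrightarrow$(ii) is indeed immediate from the displayed formula for $(\Delta^m\gamma)_n$; the implication (iii)$\Rightarrow$(i) follows from the degree-lowering property of $\Delta$; and the Newton forward-difference expansion gives (i)$\Rightarrow$(iii) cleanly, with the truncation justified exactly as you say.

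There is nothing to compare with here, however, because the paper does not prove this lemma at all: it is quoted as a known result from \cite{ZJIBJS} and stated without proof. Your write-up supplies a self-contained argument where the paper simply cites the literature, so any ``difference in approach'' is vacuous. If anything, your proof is the standard finite-difference calculus derivation one would expect to find in the cited source.
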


The following result is immediate from Lemma \ref{l1} and the generalization of  \cite[Theorem 2.2]{SPD } for $k$-quasi-$m$-isometric composition operators.
\begin{lemma} \label{l4}Let  $(X,  \mathcal{F}, \mu)$ be a discrete measure space, $\phi$ be a nonsingular measurable transformation on $X$,  and   $C$ be the  composition operator on $L^{2}(\mu)$ induced by  $\phi$. Then for any $m\in\mathbb{N}$ and $k\in\mathbb{Z_+}$ the following are equivalent:\\\\
	(i) $C$ is an $k$-quasi-$m$-isometry,\\\\
	(ii) $\sum^{m}_{j=0} (-1)^{j}\begin{pmatrix} m \\
		j \end{pmatrix} C^{*(k+j)}C^{(k+j)}=0,$\\\\
	(iii) $\sum^{m}_{j=0} (-1)^{j}\begin{pmatrix} m \\
		j \end{pmatrix} C^{*(n+k+j)}C^{(n+k+j)}=0~~~\mathrm{for}~~~n\in\mathbb{Z_+},$\\\\
	(iv) $\sum^{m}_{j=0} (-1)^{j}\begin{pmatrix} m \\
		j \end{pmatrix} h_{n+k+j}(x) =0~~~\textrm{for all}~~~ x\in X~~~\mathrm{and}~~~n\in\mathbb{Z_+},$\\\\
	(v) $\{h_{n+k}(x)\}_{n=0}^\infty$ is a polynomial in $n$ of degree at most $m-1$ for all $x \in X.$	\\
\end{lemma}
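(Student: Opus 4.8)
The plan is to reduce everything to the single sequence of positive operators $\beta_p:=C^{*p}C^{p}$ and then read off the scalar content pointwise. First I would record the two structural facts that drive the whole argument. On one hand, reindexing the defining sum of $\mathcal{B}_m(C)$ by $i=m-j$ gives $\mathcal{B}_m(C)=(-1)^m\sum_{i=0}^m(-1)^i\binom{m}{i}\beta_i$, whence $C^{*k}\mathcal{B}_m(C)C^{k}=(-1)^m\sum_{j=0}^m(-1)^j\binom{m}{j}\beta_{k+j}$; this makes the equivalence of (i) and (ii) a one-line sign bookkeeping, since the two sides vanish simultaneously. On the other hand, the norm identity $\|C^{p}f\|^2=\int_X h_p|f|^2\,d\mu$ quoted above says precisely that $\beta_p=C^{*p}C^{p}=M_{h_p}$ is the multiplication operator by $h_p$; because multiplication operators combine symbolwise, the operator appearing in (ii) and (iii) equals $M_g$ with $g=\sum_{j=0}^m(-1)^j\binom{m}{j}h_{n+k+j}$.

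The implication (iii) $\Rightarrow$ (ii) is the special case $n=0$. For (ii) $\Rightarrow$ (iii) I would use the sandwiching observation: since powers of $C$ commute and $C^{*(n+k)}=C^{*n}C^{*k}$, one has $C^{*(n+k)}\mathcal{B}_m(C)C^{n+k}=C^{*n}\big(C^{*k}\mathcal{B}_m(C)C^{k}\big)C^{n}$, so the vanishing at $n=0$ forces vanishing for every $n\in\mathbb{Z_+}$; rewriting this through the first structural fact turns it back into the sum displayed in (iii). This is exactly the step where the generalization of \cite[Theorem 2.2]{SPD} to $k$-quasi-$m$-isometric composition operators is invoked.

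For (ii)/(iii) $\Leftrightarrow$ (iv) I would pass to symbols: the operator $M_g$ is zero iff $g=0$ $\mu$-almost everywhere, and since the space is discrete with $\mu(\{x\})>0$ for every $x\in X$, vanishing a.e. coincides with vanishing at every point; applied to $g=\sum_{j=0}^m(-1)^j\binom{m}{j}h_{n+k+j}$ this yields (iv). Finally, (iv) $\Leftrightarrow$ (v) is a direct appeal to Lemma \ref{l1}: fixing $x\in X$ and setting $\gamma_n:=h_{n+k}(x)$, condition (iv) is exactly \ref{l1}(ii) for the sequence $\gamma=\{\gamma_n\}_{n=0}^\infty$, while (v) is \ref{l1}(iii), so the two are equivalent for each $x$, and hence overall.

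I expect no deep obstacle. The two points requiring care are the sign and index bookkeeping in the reindexing of $\mathcal{B}_m(C)$, and the justification that $\beta_p=M_{h_p}$ is genuinely diagonal with no cross terms, which relies on $C$ being an unweighted composition operator. The conceptual heart—that the $n=0$ operator identity automatically propagates to all $n$—is the elementary sandwiching above, so the remaining work is merely assembling the chain (i) $\Leftrightarrow$ (ii) $\Leftrightarrow$ (iii) $\Leftrightarrow$ (iv) $\Leftrightarrow$ (v).
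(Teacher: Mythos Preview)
Your proposal is correct and aligns with the paper's own treatment: the paper gives no detailed proof but simply declares the lemma ``immediate from Lemma \ref{l1} and the generalization of \cite[Theorem 2.2]{SPD} for $k$-quasi-$m$-isometric composition operators,'' and your argument is precisely an unpacking of that sentence---the sandwiching and reindexing handle (i)--(iii), the identification $C^{*p}C^{p}=M_{h_p}$ gives (iii) $\Leftrightarrow$ (iv), and Lemma \ref{l1} applied pointwise gives (iv) $\Leftrightarrow$ (v).
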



\begin{lemma}\label{l5}
	Let $p, \kappa \in \mathbb{N}$, $k\in\mathbb{Z_+}$ and $r\in J_{\kappa}$. If 
$$ A_r= \{(s,j)/ s\in J_\kappa, j\in J_{p+k}, \Phi_2(p+k+r)=\Phi_2(s+j)\},$$
then $\{A_r\}_{r \in J_\kappa}$ form a partition of the set  $ A= \{ (s, j)/ s\in J_\kappa, j\in J_{p+k}\}$.
	\end{lemma}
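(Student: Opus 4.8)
The plan is to reduce the statement to the elementary fact that $\Phi_2$ is, up to the choice of representative range, the residue map modulo $\kappa$. From the defining relation $p = \Phi_1(p)\kappa + \Phi_2(p)$ together with $\Phi_2(p) \in J_\kappa = \{1, \ldots, \kappa\}$, I would first record the congruence characterization
$$\Phi_2(a) = \Phi_2(b) \iff a \equiv b \pmod{\kappa}, \qquad a, b \in \mathbb{Z}.$$
The forward implication follows because $\Phi_2(a) = \Phi_2(b)$ forces $a - b = \bigl(\Phi_1(a) - \Phi_1(b)\bigr)\kappa$; the converse holds since two elements of $\{1, \ldots, \kappa\}$ that are congruent modulo $\kappa$ must coincide.

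To prove the partition claim I would show that each $(s,j) \in A$ lies in exactly one $A_r$. Fix $(s,j) \in A$; since $\Phi_2$ takes values in $J_\kappa$, the quantity $\Phi_2(s+j)$ is some element of $J_\kappa$. The key step is that the map $r \mapsto \Phi_2(p+k+r)$ is a bijection of $J_\kappa$ onto itself: by the congruence characterization it is injective, since $\Phi_2(p+k+r_1) = \Phi_2(p+k+r_2)$ gives $r_1 \equiv r_2 \pmod{\kappa}$ and hence $r_1 = r_2$ for $r_1, r_2 \in J_\kappa$, and injectivity of a self-map of the finite set $J_\kappa$ forces surjectivity. Consequently there is a unique $r \in J_\kappa$ with $\Phi_2(p+k+r) = \Phi_2(s+j)$, that is, a unique $r$ with $(s,j) \in A_r$. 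This simultaneously yields the covering property $A = \bigcup_{r \in J_\kappa} A_r$ and the pairwise disjointness $A_{r_1} \cap A_{r_2} = \emptyset$ for $r_1 \neq r_2$; combined with the obvious inclusions $A_r \subseteq A$, it shows that $\{A_r\}_{r \in J_\kappa}$ partitions $A$.

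If nonemptiness of the blocks is also desired, I would apply the same argument in the other variable: for each fixed $j \in J_{p+k}$ the map $s \mapsto \Phi_2(s+j)$ is likewise a bijection of $J_\kappa$, so there is exactly one $s \in J_\kappa$ with $(s,j) \in A_r$; since $J_{p+k} \neq \emptyset$ (because $p \in \mathbb{N}$), each $A_r$ is nonempty.

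I do not anticipate a genuine obstacle here: once $\Phi_2$ is identified with a residue map, the whole argument is a short counting computation. The one point demanding care is the bookkeeping forced by the representative range $J_\kappa = \{1, \ldots, \kappa\}$ rather than $\{0, \ldots, \kappa - 1\}$, which is precisely what makes the uniqueness of $r$ within $J_\kappa$ legitimate.
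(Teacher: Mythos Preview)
Your proof is correct and follows essentially the same approach as the paper: the paper's proof also rests on the observation that $\Phi_2(p+k+1),\ldots,\Phi_2(p+k+\kappa)$ are the $\kappa$ distinct elements of $J_\kappa$, which is exactly your bijection $r\mapsto\Phi_2(p+k+r)$. Your version is more explicit in deriving this from the congruence characterization of $\Phi_2$, whereas the paper simply asserts the distinctness and then reads off disjointness and covering; but the underlying idea is the same.
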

	
 \begin{proof}
 	First note that each $A_r$ is nonempty.  Since  $ \Phi_2(p+k+1),~~~~~~ \Phi_2(p+k+2),\ldots, \Phi_2(p+k+\kappa)$ are the distinct elements in the set $J_\kappa$, 
$A_r \cap A_t$ is empty for $s\neq t$ in $J_\kappa$.  If $(s, J)\in A$, then there exists $r\in J_\kappa$ such that $(s, J)\in A_r $ since $ 2\leq s+j \leq p+k+\kappa. $ Therefore, $ A = \bigcup_{r \in J_\kappa} A_r $.
 \end{proof}
 
\begin{lemma}\label{l6}
	Assume that $m\in \mathbb{N}$, $k\in\mathbb{Z_+}$, (\ref{equ1}) holds and   $$ \Sigma_{r=1}^\kappa \Sigma_{i=1}^{\eta_r} \Sigma_{j=1}^m \mu(x_{i, j}^r) <\infty.$$ Then 
	$$
	\begin{array}{ll}
		\Sigma_{r=1}^\kappa \mu(x_r) \Sigma_{p=0}^{m} (-1)^p \begin{pmatrix} m \\
			p \end{pmatrix} h_{p+k}(x_r)  &=
		-\Sigma_{r=1}^{\kappa} \Sigma_{i=1}^{\eta_r} \Sigma_{p=0}^{m-1} (-1)^p \begin{pmatrix} m-1 \\ p \end{pmatrix} \mu(x_{i, p+k+1}^r) \\
	& = -\Sigma_{r=1}^{\kappa} \Sigma_{i=1}^{\eta_r} \Sigma_{p=0}^{m-1} (-1)^p \begin{pmatrix} m-1 \\ p \end{pmatrix} \mu(x_{i, 1}^r)h_{p+k}(x_{i, 1}^r).
	\end{array} $$
\end{lemma}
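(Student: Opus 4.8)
The plan is to evaluate the left-hand side by inserting the explicit formula for the Radon--Nikodym derivative on the circuit vertices and then separating the resulting expression into a ``circuit contribution'' and a ``branch contribution.'' From the displayed formula for $h_p$ we have, for each $r\in J_\kappa$ and each $p$,
\[
\mu(x_r)\,h_{p+k}(x_r)=\mu\bigl(x_{\Phi_2(p+k+r)}\bigr)+\sum_{j=1}^{p+k}\ \sum_{\substack{s=1\\ \Phi_2(p+k+r)=\Phi_2(s+j)}}^{\kappa}\ \sum_{i=1}^{\eta_s}\mu(x^s_{i,j}).
\]
Substituting this into $\sum_{r=1}^{\kappa}\sum_{p=0}^{m}(-1)^p\binom{m}{p}\mu(x_r)h_{p+k}(x_r)$ splits the sum as $A+B$, where $A$ collects the terms $\mu(x_{\Phi_2(p+k+r)})$ and $B$ collects the triple branch sums. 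The finiteness hypothesis $\sum_{r,i,j}\mu(x^r_{i,j})<\infty$ is exactly what guarantees absolute convergence throughout, so that every interchange of summation below is legitimate.

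First I would dispose of the circuit term $A$. For each fixed $p$, as $r$ runs over $J_\kappa$ the integers $p+k+1,\dots,p+k+\kappa$ are $\kappa$ consecutive values, so by the same observation used in the proof of Lemma~\ref{l5} their images $\Phi_2(p+k+1),\dots,\Phi_2(p+k+\kappa)$ are precisely the distinct elements of $J_\kappa$. Hence $\sum_{r=1}^{\kappa}\mu(x_{\Phi_2(p+k+r)})=\sum_{r=1}^{\kappa}\mu(x_r)$, a quantity independent of $p$. Pulling it out gives $A=\bigl(\sum_r\mu(x_r)\bigr)\sum_{p=0}^m(-1)^p\binom{m}{p}=0$, since $\sum_{p=0}^m(-1)^p\binom{m}{p}=(1-1)^m=0$ for $m\ge1$.

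Next, for the branch term $B$ I would, for each fixed $p$, recognize the inner double sum over $s$ subject to $\Phi_2(p+k+r)=\Phi_2(s+j)$ as summation over the block $A_r$ of Lemma~\ref{l5}. Since $\{A_r\}_{r\in J_\kappa}$ partitions $A=\{(s,j):s\in J_\kappa,\ j\in J_{p+k}\}$, summing over $r$ collapses the constraint and yields $B=\sum_{s=1}^{\kappa}\sum_{i=1}^{\eta_s}\sum_{p=0}^{m}(-1)^p\binom{m}{p}S_p$, where $S_p:=\sum_{j=1}^{p+k}\mu(x^s_{i,j})$. The heart of the argument is then a finite-difference computation: using the identity $\sum_{p=0}^m(-1)^p\binom{m}{p}S_p=(-1)^m(\Delta^mS)_0$ recorded before Lemma~\ref{l1}, together with $(\Delta S)_p=S_{p+1}-S_p=\mu(x^s_{i,p+k+1})$, one peels off one difference to obtain $(\Delta^mS)_0=(\Delta^{m-1}(\Delta S))_0=(-1)^{m-1}\sum_{p=0}^{m-1}(-1)^p\binom{m-1}{p}\mu(x^s_{i,p+k+1})$. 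The two sign factors combine to $-1$, giving $B=-\sum_{r=1}^{\kappa}\sum_{i=1}^{\eta_r}\sum_{p=0}^{m-1}(-1)^p\binom{m-1}{p}\mu(x^r_{i,p+k+1})$, which is the first claimed equality after renaming $s$ to $r$. I expect this telescoping bookkeeping --- correctly tracking the drop from $\binom{m}{p}$ to $\binom{m-1}{p}$ and the accompanying sign --- to be the only genuinely delicate step.

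Finally, the second equality is immediate from the Radon--Nikodym formula on the branch vertices: since $h_{p+k}(x^r_{i,1})=\mu(x^r_{i,p+k+1})/\mu(x^r_{i,1})$, we have $\mu(x^r_{i,1})\,h_{p+k}(x^r_{i,1})=\mu(x^r_{i,p+k+1})$, so replacing each factor $\mu(x^r_{i,p+k+1})$ term-by-term produces the right-hand expression, completing the proof.
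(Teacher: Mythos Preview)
Your proposal is correct and follows essentially the same route as the paper: substitute the explicit formula for $h_{p+k}(x_r)$, split into a circuit contribution and a branch contribution, kill the circuit term via $\sum_{p=0}^m(-1)^p\binom{m}{p}=0$, collapse the constraint on the branch term using the partition of Lemma~\ref{l5}, and then reduce the order of the alternating binomial sum by one. The only cosmetic difference is in that last reduction: the paper swaps the order of summation and invokes the partial-sum identity $\sum_{p=j}^{m}(-1)^p\binom{m}{p}=(-1)^j\binom{m-1}{j-1}$ directly, whereas you package the same computation via the forward-difference operator $\Delta$ introduced before Lemma~\ref{l1}; the two are equivalent and your phrasing is arguably tidier.
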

\begin{proof}
	Let $p\in \mathbb{N}$ and $k\in\mathbb{Z_+}$. The  Radon Nikodym derivative $h_{p+k}$  is defined by 
	
	$$h_{p+k}(x)= \left\{
	\begin{array}{ll}
		\frac{\mu(x^r_{i,j+p+k})}{\mu(x^r_{i, j})}& \mathrm{if}~~ x=x^r_{i,j},  ~~ r\in J_\kappa,  ~~  i\in J_{\eta_r},\\
		& ~~j\in \mathbb{N},\\\\
		\frac{\mu(x_{{\Phi_2}_(p+k+r)})+\Sigma_{j=1}^{p+k}\Sigma_{s=1,  \Phi_2(p+k+r)=\Phi_2(s+j) }^{\kappa}
			\Sigma _{i=1}^{\eta_{s}} \mu(x^s_{i,j})}{\mu(x_r)}	 &\mathrm{if}~~ x=x_r,~~r\in J_\kappa. \\\\	
	\end{array}\right.$$
	
Now we obtain\\
	$\Sigma_{r=1}^\kappa \mu(x_r) \Sigma_{p=0}^{m} (-1)^p \begin{pmatrix} m \\
		p \end{pmatrix} h_{p+k}(x_r) $\\
	$ = \Sigma_{r=1}^\kappa \mu(x_r) \Sigma_{p=0}^{m} (-1)^p \begin{pmatrix} m \\
		p \end{pmatrix}\frac{\mu(x_{{\Phi_2}_(p+k+r)})+\Sigma_{j=1}^{p+k}\Sigma_{s=1, \Phi_2(p+k+r)=\Phi_2(s+j) }^{\kappa}
		\Sigma _{i=1}^{\eta_{s}} \mu(x^s_{i,j})}{\mu(x_r)}$ \\
	$ = \Sigma_{r=1}^\kappa  \Sigma_{p=0}^{m} (-1)^p \begin{pmatrix} m \\
		p \end{pmatrix}\mu(x_{{\Phi_2}_(p+k+r)}) 	\\
	 + \Sigma_{r=1}^\kappa  \Sigma_{p=0}^{m} (-1)^p \begin{pmatrix} m \\
		p \end{pmatrix} \Sigma_{j=1}^{p+k}\Sigma_{s=1, \Phi_2(p+k+r)=\Phi_2(s+j) }^{\kappa}
		\Sigma _{i=1}^{\eta_{s}} \mu(x^s_{i,j})$\\
	$ = 0	+ \Sigma_{r=1}^\kappa  \Sigma_{p=0}^{m} (-1)^p \begin{pmatrix} m \\
		p \end{pmatrix} \Sigma_{j=1}^{p+k}\Sigma_{s=1, \Phi_2(p+k+r)=\Phi_2(s+j) }^{\kappa}
	\Sigma _{i=1}^{\eta_{s}} \mu(x^s_{i,j}) $ \\
	$=   \Sigma_{p=0}^{m} (-1)^p \begin{pmatrix} m \\
		p \end{pmatrix} \Sigma_{j=1}^{p+k}\Sigma_{r=1}^{\kappa}
	\Sigma _{i=1}^{\eta_{r}} \mu(x^r_{i,j}) $\\
	$= \Sigma_{p=0}^{m} (-1)^p \begin{pmatrix} m \\
		p \end{pmatrix} \Sigma_{j=1}^{k}\Sigma_{r=1}^{\kappa}
	\Sigma _{i=1}^{\eta_{r}} \mu(x^r_{i,j}) + \Sigma_{p=1}^{m} (-1)^p \begin{pmatrix} m \\
		p \end{pmatrix} \Sigma_{j=1}^{m}\Sigma_{r=1}^{\kappa}
	\Sigma _{i=1}^{\eta_{r}} \mu(x^r_{i,k+j}) .$ \\\\\\
	Since $\Sigma_{p=0}^{m} (-1)^p \begin{pmatrix} m \\
		p \end{pmatrix} \Sigma_{j=1}^{k}\Sigma_{r=1}^{\kappa}
	\Sigma _{i=1}^{\eta_{r}} \mu(x^r_{i,j}) =0$, it follows that 
	\begin{align*}
	\Sigma_{r=1}^\kappa \mu(x_r) \Sigma_{p=0}^{m} (-1)^p \begin{pmatrix} m \\
		p \end{pmatrix} h_{p+k}(x_r) &= 0 + \Sigma_{r=1}^{\kappa} \Sigma _{i=1}^{\eta_{r}} \Sigma_{j=1}^{m} \Sigma_{p=j}^{m} (-1)^p \begin{pmatrix} m \\ p \end{pmatrix} \mu(x^r_{i,k+j})\\
	 &= \Sigma_{r=1}^{\kappa} \Sigma _{i=1}^{\eta_{r}} \Sigma_{j=1}^{m}  (-1)^j \begin{pmatrix} m-1 \\ j-1 \end{pmatrix} \mu(x^r_{i,k+j}) \\
	& = -\Sigma_{r=1}^{\kappa} \Sigma _{i=1}^{\eta_{r}} \Sigma_{j=0}^{m-1}  (-1)^j \begin{pmatrix} m-1 \\ j \end{pmatrix} \mu(x^r_{i,k+j+1}) \\
	&= -\Sigma_{r=1}^{\kappa} \Sigma _{i=1}^{\eta_{r}} \Sigma_{j=0}^{m-1}  (-1)^j \begin{pmatrix} m-1 \\ j \end{pmatrix} \mu(x^r_{i,1}) h_{k+j}(x^r_{i,1}) \\
	&= -\Sigma_{r=1}^{\kappa} \Sigma _{i=1}^{\eta_{r}} \Sigma_{p=0}^{m-1}  (-1)^j \begin{pmatrix} m-1 \\ p \end{pmatrix} \mu(x^r_{i,1}) h_{k+p}(x^r_{i,1}).
	\end{align*}
This completes the proof.
\end{proof}
\begin{proposition}\label{p1}
	Suppose $m\geq 2$, $k\in\mathbb{Z_+}$,  (\ref{equ1}) holds,  $\{\mu(x^{r}_{i,k+j+1})\}_{j=0}^\infty $ is a polynomial in $j$ of degree at most $m-1$ for every $r\in J_\kappa $, $i\in J_{\eta_{r} }$ and 
	$  \Sigma_{i=1}^{\eta_r} \Sigma_{j=1}^m \mu(x_{i, j}^r) <\infty$  for all $r\in J_\kappa $. Then $\{\mu(x^r_{i,k+j+1})\}_{j=0}^\infty $ is a polynomial in $j$ of degree at most $m-2$  if and only if 
	$	\Sigma_{r=1}^\kappa \mu(x_r) \Sigma_{p=0}^{m} (-1)^p \begin{pmatrix} m \\
		p \end{pmatrix} h_{p+k}(x_r) = 0 .$
\end{proposition}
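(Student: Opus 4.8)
The plan is to read off the identity in Lemma~\ref{l6}, recast its right-hand side through the forward-difference calculus of Lemma~\ref{l1}, and then let the positivity of $\mu$ decide the sign. First I would note that the summability hypothesis $\Sigma_{i=1}^{\eta_r}\Sigma_{j=1}^m\mu(x^r_{i,j})<\infty$ for each $r\in J_\kappa$, together with the finiteness of $\kappa$, gives $\Sigma_{r=1}^\kappa\Sigma_{i=1}^{\eta_r}\Sigma_{j=1}^m\mu(x^r_{i,j})<\infty$, so Lemma~\ref{l6} applies and yields
\[
\Sigma_{r=1}^\kappa\mu(x_r)\Sigma_{p=0}^m(-1)^p\binom{m}{p}h_{p+k}(x_r)=-\Sigma_{r=1}^\kappa\Sigma_{i=1}^{\eta_r}\Sigma_{p=0}^{m-1}(-1)^p\binom{m-1}{p}\mu(x^r_{i,p+k+1}).
\]

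Next, for fixed $r\in J_\kappa$ and $i\in J_{\eta_r}$ I would introduce the sequence $\gamma^{r,i}=\{\gamma^{r,i}_j\}_{j=0}^\infty$ with $\gamma^{r,i}_j=\mu(x^r_{i,k+j+1})$, so that $\mu(x^r_{i,p+k+1})=\gamma^{r,i}_p$ and the inner sum is precisely $\Sigma_{p=0}^{m-1}(-1)^p\binom{m-1}{p}\gamma^{r,i}_p$. By the difference formula recalled before Lemma~\ref{l1} (applied with $m-1$ in place of $m$, at $n=0$), this inner sum equals $(-1)^{m-1}(\Delta^{m-1}\gamma^{r,i})_0$. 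Since by hypothesis $\gamma^{r,i}$ is a polynomial in $j$ of degree at most $m-1$, Lemma~\ref{l1} shows $\Delta^{m-1}\gamma^{r,i}$ is a constant $c_{r,i}=(m-1)!\,a^{r,i}$, where $a^{r,i}$ is the coefficient of $j^{m-1}$ in $\gamma^{r,i}$; moreover $c_{r,i}=0$ if and only if $\gamma^{r,i}$ has degree at most $m-2$.

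The decisive point is the sign of $a^{r,i}$. Because $\mu(x)>0$ for every $x\in X$, we have $\gamma^{r,i}_j>0$ for all $j\in\mathbb{Z_+}$, and letting $j\to\infty$ forces $a^{r,i}>0$ when $\deg\gamma^{r,i}=m-1$ and $a^{r,i}=0$ otherwise; in all cases $a^{r,i}\geq 0$. Substituting back, the right-hand side of the displayed identity becomes $(-1)^m(m-1)!\,\Sigma_{r=1}^\kappa\Sigma_{i=1}^{\eta_r}a^{r,i}$, a fixed nonzero scalar times a sum of nonnegative numbers. Hence the left-hand side vanishes if and only if $\Sigma_{r=1}^\kappa\Sigma_{i=1}^{\eta_r}a^{r,i}=0$, which by nonnegativity happens exactly when $a^{r,i}=0$ for every $r$ and $i$, i.e. exactly when each $\{\mu(x^r_{i,k+j+1})\}_{j=0}^\infty$ is a polynomial of degree at most $m-2$; this proves both directions at once. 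I expect the main obstacle to be conceptual rather than computational: one must realize that a single scalar equation can force a degree drop simultaneously in every branch, and the mechanism that makes this work is precisely the nonnegativity of the leading differences $a^{r,i}$, which converts ``the sum is zero'' into ``each summand is zero.''
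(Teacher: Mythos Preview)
Your proof is correct and follows essentially the same route as the paper: apply Lemma~\ref{l6}, rewrite the right-hand side as $(-1)^m$ times a sum of the $(m-1)$-st forward differences (the constants $a^{r,i}$), and use nonnegativity to turn the single scalar equation into the simultaneous vanishing of all leading coefficients. Your argument is in fact a bit more explicit than the paper's in justifying $a^{r,i}\geq 0$ from the positivity of $\mu$ via $j\to\infty$, and in checking that the summability hypothesis of Lemma~\ref{l6} is met.
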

\begin{proof}
Given that  $\{\mu(x^{r}_{i,k+j+1})\}_{j=0}^\infty $ is a polynomial in $j$ of degree at most $m-1$ for every $r\in J_\kappa $, $i\in J_{\eta_{r} }$ and 	$  \Sigma_{i=1}^{\eta_r} \Sigma_{j=1}^m \mu(x_{i, j}^r) <\infty$ for all  $r\in J_\kappa$. 	 Then by \cite[Corollory 2.2 ]{ZJJK }, we have
 $$ \Delta^{m-1}(\mu(x^{r}_{i,k+j+1})) = a_{i}^r, r\in J_\kappa , i\in J_{\eta_{r} }. $$
 Then by Lemma \ref{l6} , we obtain
 \begin{align*}
	\Sigma_{r=1}^\kappa \mu(x_r) \Sigma_{p=0}^{m} (-1)^p \begin{pmatrix} m \\
	 	p \end{pmatrix} h_{p+k}(x_r)  &
	 = -\Sigma_{r=1}^{\kappa} \Sigma _{i=1}^{\eta_{r}} \Sigma_{j=0}^{m-1}  (-1)^j \begin{pmatrix} m-1 \\ j \end{pmatrix} \mu(x^r_{i,k+j+1})  \\
	 &  = -\Sigma_{r=1}^{\kappa} \Sigma _{i=1}^{\eta_{r}} (-1)^{m-1} \Delta^{m-1} (\mu(x^r_{i,k+j+1}))_0 \\
	   &= -\Sigma_{r=1}^{\kappa} \Sigma _{i=1}^{\eta_{r}} (-1)^{m-1} a_{i}^r .
\end{align*}	  
	  Since $ a_i^{r}> 0$ for all $~~r\in J_\kappa $ ,  $i\in J_{\eta_r} $ and $j \in\mathbb{Z_+}$,  it follows that  
	  \begin{align*}
	   \Sigma_{r=1}^\kappa \mu(x_r) \Sigma_{p=0}^{m} (-1)^p \begin{pmatrix} m \\
	  	p \end{pmatrix} h_{p+k}(x_r)= 0 
  ~~	& \iff a_i^{r}= 0\\
~~	& \iff\Delta^{m-1} (\mu(x^r_{i,k+j+1})) = 0.
	\end{align*}

That is, 
	$   \{\mu(x^{r}_{i,k+j+1})\}_{j=0}^\infty $ is a polynomial in $j$ of degree at most  $m-2$  for every  $r\in J_\kappa $ and $ i\in J_{\eta_{r} }.$
	\end{proof}
	
\begin{theorem}\label{thm1}
	Let $m\geq 2$, $k\in\mathbb{Z_+}$, and  (\ref{equ1}) holds. Then $ C \in B(L^2(\mu))$ is $k$-quasi-$m$-isometry  if and only if $ \{\mu(x^{r}_{i,k+j+1})\}_{j=0}^\infty $ is a polynomial in $j$ of degree at most $m-2$ for every $r\in J_\kappa $, $i\in J_{\eta_{r} }$ and  $	\Sigma_{p=0}^{m} (-1)^p \begin{pmatrix} m \\
		p \end{pmatrix} h_{p+k}(x_r)= 0  ~~ \textrm{for all}~~r \in J_\kappa .$
\end{theorem}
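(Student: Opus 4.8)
The plan is to run everything through the Radon--Nikodym reformulation of Lemma \ref{l4}, which says that $C$ is a $k$-quasi-$m$-isometry if and only if
$\sum_{p=0}^{m}(-1)^{p}\binom{m}{p}h_{n+k+p}(x)=0$ for every $x\in X$ and every $n\in\mathbb{Z_+}$, equivalently $\{h_{n+k}(x)\}_{n}$ is a polynomial in $n$ of degree at most $m-1$ for each $x$. I would then verify this atom by atom, splitting $X$ into the circuit vertices $x_r$ and the branch vertices $x^r_{i,j}$. Write (A) for the stated branch condition (each $\{\mu(x^r_{i,k+j+1})\}_{j}$ a polynomial of degree $\le m-2$) and (B) for the stated circuit condition ($\sum_{p=0}^m(-1)^p\binom{m}{p}h_{p+k}(x_r)=0$ for all $r$). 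For a branch vertex the explicit formula gives $h_{n+k}(x^r_{i,j})=\mu(x^r_{i,j+n+k})/\mu(x^r_{i,j})$, so the polynomial condition there is exactly that $\{\mu(x^r_{i,k+n+1})\}_n$ be a polynomial in $n$ (a change of $j$ only shifts the argument, so $j=1$ suffices by Lemma \ref{l1}); this recovers (A) \emph{except} that the branch analysis alone only forces degree $\le m-1$. The drop to degree $\le m-2$ must be supplied by the circuit vertices.

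The circuit vertices are the crux, and for them I would first isolate the one-step ancestor recursion coming from $\phi^{-1}(\{x_r\})=\{x_{\Phi_2(1+r)}\}\cup\{x^r_{i,1}:i\in J_{\eta_r}\}$, namely
\[
\mu(x_r)\,h_{n+k+1}(x_r)=\mu(x_{\Phi_2(1+r)})\,h_{n+k}(x_{\Phi_2(1+r)})+\sum_{i=1}^{\eta_r}\mu(x^r_{i,n+k+1}).
\]
Setting $\tilde D_r(n)=\mu(x_r)\sum_{p=0}^{m}(-1)^p\binom{m}{p}h_{n+k+p}(x_r)$ and applying $\sum_{p}(-1)^p\binom{m}{p}$ to the recursion converts it into the single transport identity
\[
\tilde D_r(n+1)=\tilde D_{\Phi_2(1+r)}(n)+(-1)^m\sum_{i=1}^{\eta_r}\bigl(\Delta^m\mu(x^r_{i,k+\,\cdot\,+1})\bigr)_n .
\]

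This identity drives both implications. For the forward direction, if $C$ is a $k$-quasi-$m$-isometry then Lemma \ref{l4}(iv) makes every $\tilde D_r\equiv 0$; evaluating at $n=0$ yields (B) at once, and boundedness of $C$ gives $\|C^m\|^2=\|h_m\|_\infty<\infty$, whence each $h_m(x_r)<\infty$, and summing over $r$ via the partition of Lemma \ref{l5} produces the summability $\sum_{r,i}\sum_{j=1}^{m}\mu(x^r_{i,j})<\infty$. Feeding the branch polynomiality of degree $\le m-1$, this summability, and the $\mu(x_r)$-weighted sum of (B) into Proposition \ref{p1} upgrades the branch degree to $\le m-2$, giving (A). For the converse, (A) forces $\Delta^m\mu(x^r_{i,k+\,\cdot\,+1})=0$ (degree $\le m-2<m$), so the transport identity collapses to $\tilde D_r(n+1)=\tilde D_{\Phi_2(1+r)}(n)$; iterating it and using $\Phi_2(1+\Phi_2(a))=\Phi_2(1+a)$ gives $\tilde D_r(n)=\tilde D_{\Phi_2(n+r)}(0)$, and (B) says every base value $\tilde D_{r'}(0)$ vanishes, so $\tilde D_r\equiv 0$. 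Together with the branch identity (which (A) already supplies) this is precisely condition (iv) of Lemma \ref{l4}, so $C$ is a $k$-quasi-$m$-isometry.

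The step I expect to be the main obstacle is exactly the passage from the $\mu(x_r)$-weighted, summed-over-$r$ statements that Lemma \ref{l6} and Proposition \ref{p1} provide to the genuinely \emph{per-vertex} condition (B) demanded by the theorem: the alternating sum of the periodic circuit term $\mu(x_{\Phi_2(n+k+r)})$ does not vanish for an individual $r$, so one cannot read $D_r(n)=0$ off from $\sum_r\mu(x_r)D_r(n)=0$. The one-step recursion above is what circumvents this, carrying the base identities $\{\tilde D_{r'}(0)=0\}_{r'\in J_\kappa}$ around the circuit to all $n$. A secondary technical point, easy to overlook, is that Lemma \ref{l6} and Proposition \ref{p1} carry a finiteness hypothesis which is not listed in the theorem; I would derive it from mere boundedness of $C$ through $\|C^m\|^2=\|h_m\|_\infty$ and the partition Lemma \ref{l5}, as indicated above.
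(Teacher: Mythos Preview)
Your argument is correct, and its forward direction matches the paper's: split $X$ into branch and circuit vertices, read off (B) and the degree-$(m-1)$ branch bound directly from Lemma~\ref{l4}, then invoke Proposition~\ref{p1} to upgrade the branch degree to $m-2$. You are also more careful than the paper about the finiteness hypothesis of Proposition~\ref{p1}, which you correctly extract from $\|h_m\|_\infty<\infty$ together with Lemma~\ref{l5}; the paper leaves this implicit.

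Where you diverge is the backward direction. You work with condition (iv) of Lemma~\ref{l4}, the identity for \emph{every} $n\in\mathbb{Z_+}$, and are therefore forced to build the one-step recursion and iterate it around the circuit to show $\tilde D_r(n)=\tilde D_{\Phi_2(n+r)}(0)=0$. The paper instead uses the equivalent condition (ii), namely the single identity $\sum_{p=0}^m(-1)^p\binom{m}{p}h_{p+k}(x)=0$ for all $x\in X$, with no auxiliary $n$. With that formulation the converse is immediate: (A) gives the branch identity at every $x^r_{i,j}$ (degree $\le m-2$ certainly forces $\Delta^m=0$), and (B) \emph{is} the circuit identity, so nothing remains. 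Your transport recursion is valid and yields a self-contained argument, but it is effectively reproving, at the circuit vertices, the (ii)$\Leftrightarrow$(iv) equivalence already recorded in Lemma~\ref{l4}. The ``main obstacle'' you flag---descending from the $\mu(x_r)$-weighted sum in Proposition~\ref{p1} to the per-vertex condition (B)---never actually arises: in the forward direction (B) is read off per vertex directly, and in the backward direction it is a hypothesis.
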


\begin{proof}
	For $m\geq 2$, $k\in\mathbb{Z_+}$, the composition operator induced by the measurable function $\phi$ on  directed graphs with one circuit and more than one branching vertex, $ C \in B(L^2(\mu))$ is $k$-quasi-$m$-isometry  if and only if 
	$$\Sigma_{p=0}^{m} (-1)^p \begin{pmatrix} m \\
		p \end{pmatrix} h_{p+k}(x)= 0 $$
		$~~\textrm{for all} ~~ x \in X$. That is, 
	\begin{equation}\label{eqn2}
	\Sigma_{p=0}^{m} (-1)^p \begin{pmatrix} m \\
		p \end{pmatrix} h_{p+k}(x_r)= 0 
	\end{equation} 
	$\textrm{for all}  ~~r \in J_\kappa$ and 
\begin{equation}\label{eqn3}
	\Sigma_{p=0}^{m} (-1)^p \begin{pmatrix} m \\
		p \end{pmatrix} h_{p+k}(x^r_{i,j})= 0 ~~ \textrm{for all} ~~r \in J_\kappa, i\in J_{\eta_{r} }, j\in \mathbb{N}.
\end{equation}
	From (\ref{eqn2}), we get $\{\mu(x^{r}_{i,k+j+1})\}_{j=0}^\infty $ is a polynomial in $j$ of degree at most $m-1$  for every $r\in J_\kappa $, $i\in J_{\eta_{r} }$. From (\ref{equ1}) and  Proposition \ref{p1} , it follows  that 
	$ C \in B(L^2(\mu))$ is $k$-quasi-$m$-isometry  if and only if 
	$\{\mu(x^{r}_{i,k+j+1})\}_{j=0}^\infty $ is polynomial in $j$ of degree at most $m-2$ for every $r\in J_\kappa $, $i\in J_{\eta_{r} }$ and  $	\Sigma_{p=0}^{m} (-1)^p \begin{pmatrix} m \\
		p \end{pmatrix} h_{p+k}(x_r)= 0 ~~ \textrm{for all} ~~r \in J_\kappa.$
	\end{proof}
	
	
\begin{corollary}\label{cr1}
	If $\kappa=1 $ in (\ref{equ1}) condition, $m\geq 2$, then $ C \in B(L^2(\mu))$ is $k$-quasi-$m$-isometry  if and only if 
	$\{\mu(x^{1}_{i,k+j+1})\}_{j=0}^\infty $ is a polynomial in $j$ of degree at most $m-2$ for every  $i\in J_{\eta_{1} }$. Moreover,  if at least one of the sequence 	$\{\mu(x^{1}_{i,k+j+1})\}_{j=0}^\infty $ is a polynomial in $j$ of degree $m-2$ for some $i\in J_{\eta_{1} }$, then $C$ is strict $k$-quasi-$m$-isometry. 
\end{corollary}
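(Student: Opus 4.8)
The plan is to obtain the stated equivalence by specializing Theorem \ref{thm1} to $\kappa=1$, and then to read off strictness from a degree count for the Radon--Nikodym sequence at the single circuit vertex. When $\kappa=1$ we have $J_\kappa=\{1\}$ and $\Phi_2\equiv 1$, so the circuit consists of the single vertex $x_1$, and Theorem \ref{thm1} asserts that $C$ is a $k$-quasi-$m$-isometry if and only if (a) $\{\mu(x^1_{i,k+j+1})\}_{j=0}^\infty$ is a polynomial in $j$ of degree at most $m-2$ for every $i\in J_{\eta_1}$, and (b) $\sum_{p=0}^m(-1)^p\binom{m}{p}h_{p+k}(x_1)=0$. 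The necessity of (a) is immediate, so the content of the corollary is that, when $\kappa=1$, condition (b) is forced by (a) and is therefore redundant.

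To see this I would invoke Lemma \ref{l6} with $\kappa=1$, which gives $\mu(x_1)\sum_{p=0}^m(-1)^p\binom{m}{p}h_{p+k}(x_1)=-\sum_{i=1}^{\eta_1}\sum_{p=0}^{m-1}(-1)^p\binom{m-1}{p}\mu(x^1_{i,k+p+1})$. The inner alternating sum equals $(-1)^{m-1}$ times the $(m-1)$-st forward difference of the sequence $\{\mu(x^1_{i,k+j+1})\}_{j}$ evaluated at $j=0$. Assuming (a), each such sequence is a polynomial of degree at most $m-2$, so by Lemma \ref{l1} its $(m-1)$-st difference vanishes; summing over $i$ and dividing by $\mu(x_1)>0$ yields (b). Hence (a) alone characterizes the $k$-quasi-$m$-isometries when $\kappa=1$, which is the first assertion. (Equivalently one may argue directly from Lemma \ref{l4}(v) together with Proposition \ref{p1}.)

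For the strictness claim, suppose some sequence $\{\mu(x^1_{i_0,k+j+1})\}_j$ has degree exactly $m-2$. By the part just proved $C$ is a $k$-quasi-$m$-isometry, and $C$ being a \emph{strict} $k$-quasi-$m$-isometry means it is not a $k$-quasi-$(m-1)$-isometry; so it remains to rule out the latter. By Lemma \ref{l4}(v) it suffices to exhibit one point $x$ at which $\{h_{n+k}(x)\}_{n}$ fails to be a polynomial of degree at most $m-2$, and I would take $x=x_1$. From the explicit formula for $h_{n+k}(x_1)$ (with $\kappa=1$), its forward difference in $n$ is $\mu(x_1)^{-1}\sum_{i=1}^{\eta_1}\mu(x^1_{i,k+n+1})$. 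Each summand is a polynomial in $n$ of degree at most $m-2$ whose degree-$(m-2)$ coefficient is nonnegative, and is strictly positive exactly when the summand has degree $m-2$, because the measures $\mu(x^1_{i,\cdot})$ are strictly positive. Since the $i_0$-term contributes a strictly positive coefficient, no cancellation can occur, so this difference has degree exactly $m-2$, whence $\{h_{n+k}(x_1)\}_n$ is a polynomial of degree exactly $m-1$. This is not of degree at most $m-2$, so by Lemma \ref{l4}(v) $C$ is not a $k$-quasi-$(m-1)$-isometry, i.e. $C$ is a strict $k$-quasi-$m$-isometry.

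The main obstacle is precisely this last non-cancellation step: the branch data of degree $m-2$ must be shown to produce genuine degree-$(m-1)$ growth of $h$ at the circuit vertex, and this rests on the positivity of $\mu$, which makes all branch leading coefficients share the same sign so that they add rather than cancel. A secondary point to treat cleanly is the case $\eta_1=\infty$, where the branch sum is infinite; this causes no difficulty, since $C$ being a $k$-quasi-$m$-isometry (established in the first part) already forces $\{h_{n+k}(x_1)\}_n$ to be an honest polynomial, which legitimizes the degree count.
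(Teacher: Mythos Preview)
Your argument is correct. For the first assertion you unfold Proposition~\ref{p1} via Lemma~\ref{l6}, which is precisely the paper's route (the paper just cites Theorem~\ref{thm1} and Proposition~\ref{p1} together). For the strictness claim, however, you work harder than necessary: the paper simply reapplies the equivalence just proved with $m$ replaced by $m-1$. Indeed, once you know that for $\kappa=1$ the $k$-quasi-$m$-isometry condition is equivalent to every sequence $\{\mu(x^1_{i,k+j+1})\}_j$ having degree at most $m-2$, the existence of one sequence of degree exactly $m-2$ immediately violates the corresponding degree bound $m-3$ required for a $k$-quasi-$(m-1)$-isometry, so $C$ cannot be $k$-quasi-$(m-1)$-isometric. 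Your direct computation of the degree of $\{h_{n+k}(x_1)\}_n$ through the positivity/non-cancellation of branch leading coefficients is a valid alternative, and it has the minor advantage of handling $m=2$ without appealing to the corollary at $m-1=1$ (which lies outside its stated range $m\ge 2$); but it is considerably longer than the paper's one-line reduction.
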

\begin{proof}
	Assume that $\kappa=1 $ in (\ref{equ1}), $m\geq 2$ and $ C \in B(L^2(\mu))$.
	Then by  Theorem \ref{thm1} and Proposition \ref{p1},  it is clear that  $ C \in B(L^2(\mu))$ is $k$-quasi-$m$-isometry  if and only if 
	$\{\mu(x^{1}_{i,k+j+1})\}_{j=0}^\infty $ is a polynomial in $j$ of degree at most $m-2$, for every  $i\in J_{\eta_{1} }$. For the second part, if at least one of the sequence 	$\{\mu(x^{1}_{i,k+j+1})\}_{j=0}^\infty $ is a polynomial in $j$ of degree $m-2$ for some $i\in J_{\eta_{1} }$, then $C$ is not  $k$-quasi-$n$- isometry for any $n<m$. Therefore, $C$ is strict $k$-quasi-$m$- isometry.
\end{proof}
\begin{corollary}\label{cr2}
	Let $m\geq 2$, $k=0$, and (\ref{equ1}) holds. Then $ C \in B(L^2(\mu))$ is $m$-isometry  if and only if 	$ \{\mu(x^{r}_{i,j+1})\}_{j=0}^\infty $ is  a polynomial in $j$ of degree at most $m-2$ for every $r\in J_\kappa $, $i\in J_{\eta_{r} }$ and  $	\Sigma_{p=0}^{m} (-1)^p \begin{pmatrix} m \\
		p \end{pmatrix} h_{p}(x_r)= 0 ~~ \textrm{for all} ~~r \in J_\kappa. $
\end{corollary}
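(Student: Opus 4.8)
The plan is to obtain Corollary~\ref{cr2} as the $k=0$ instance of Theorem~\ref{thm1}, so that the whole argument reduces to checking that both sides of the equivalence degenerate correctly when $k=0$. First I would recall the defining relation of a $k$-quasi-$m$-isometry, namely $C^{*k}\mathcal{B}_m(C)C^k=0$. Setting $k=0$ collapses the outer factors, since $C^{*0}=C^0=I$, so this becomes $\mathcal{B}_m(C)=0$; but this is precisely the definition of an $m$-isometry recalled in the Introduction. Hence ``$C$ is a $0$-quasi-$m$-isometry'' and ``$C$ is an $m$-isometry'' are literally the same condition, so the left-hand side of the equivalence in Theorem~\ref{thm1} matches the left-hand side of the corollary.

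Next I would substitute $k=0$ into the two characterizing conditions on the right-hand side of Theorem~\ref{thm1}. The polynomial condition on $\{\mu(x^r_{i,k+j+1})\}_{j=0}^\infty$ becomes the condition on $\{\mu(x^r_{i,j+1})\}_{j=0}^\infty$, and the vanishing condition $\sum_{p=0}^m(-1)^p\binom{m}{p}h_{p+k}(x_r)=0$ becomes $\sum_{p=0}^m(-1)^p\binom{m}{p}h_{p}(x_r)=0$ for all $r\in J_\kappa$. These are exactly the two conditions stated in the corollary, so the right-hand sides also coincide.

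Since $0\in\mathbb{Z_+}$, the hypothesis $k\in\mathbb{Z_+}$ of Theorem~\ref{thm1} is met at $k=0$, and the remaining hypotheses ($m\geq 2$, the standing assumption~(\ref{equ1}), and $C\in B(L^2(\mu))$) carry over verbatim; in particular, any summability or finiteness requirement used inside Theorem~\ref{thm1} is inherited from the boundedness of $C$ exactly as in that proof, so nothing new must be verified here. Consequently Theorem~\ref{thm1} applies directly and yields the corollary. I expect no genuine obstacle: the only point deserving any care is the first step, the observation that inserting $k=0$ into the $k$-quasi-$m$-isometry relation recovers exactly $\mathcal{B}_m(C)=0$ rather than some weaker identity; once that identification is in place, the statement is simply a transcription of Theorem~\ref{thm1} with $k$ replaced by $0$.
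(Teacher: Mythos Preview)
Your proposal is correct and matches the paper's own proof, which simply states that the result follows from Theorem~\ref{thm1} by taking $k=0$. Your additional justification that a $0$-quasi-$m$-isometry is precisely an $m$-isometry is a welcome clarification, but the overall approach is identical.
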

\begin{proof}
If $k=0$., then the required result follows by Theorem \ref{thm1}.
\end{proof}
\begin{corollary}(\cite[Theorem 2.11]{ZJJK })\label{cr3}
Let $m\geq 2$, $k=0$,  $\eta_i= 0,~~ \textrm{for all} ~~ i \in J_{\kappa -1}$ and (\ref{equ1}) hold. Then $ C \in B(L^2(\mu))$ is $m$- isometry if and only if $ \{\mu(x^{\kappa}_{i,j+1})\}_{j=0}^\infty $ is a polynomial in $j$ of degree atmost $m-2$ for every  $i\in J_{\eta_{\kappa} }$ and  $	\Sigma_{p=0}^{m} (-1)^p \begin{pmatrix} m \\
	p \end{pmatrix} h_{p}(x_r)= 0 ~~ \textrm{for all} ~~r \in J_\kappa $.
\end{corollary}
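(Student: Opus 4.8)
The plan is to obtain this statement as a direct specialization of Corollary~\ref{cr2}, which is itself the $k=0$ instance of Theorem~\ref{thm1}. Corollary~\ref{cr2} asserts that, under~(\ref{equ1}) with $k=0$ and $m\geq 2$, the operator $C\in B(L^2(\mu))$ is an $m$-isometry if and only if two conditions hold simultaneously: first, $\{\mu(x^{r}_{i,j+1})\}_{j=0}^\infty$ is a polynomial in $j$ of degree at most $m-2$ for every $r\in J_\kappa$ and $i\in J_{\eta_r}$; and second, $\Sigma_{p=0}^{m}(-1)^p\begin{pmatrix} m \\ p\end{pmatrix}h_p(x_r)=0$ for all $r\in J_\kappa$. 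The entire task is therefore to read off what these two conditions become once the extra hypothesis $\eta_i=0$ for all $i\in J_{\kappa-1}$ is imposed.

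First I would record the structural meaning of this hypothesis: among the circuit vertices $x_1,\ldots,x_\kappa$, only $x_\kappa$ carries any branches, with $\eta_\kappa$ of them, while $x_1,\ldots,x_{\kappa-1}$ have none. The key observation is then that for each $r\in J_{\kappa-1}$ the branch-index set $J_{\eta_r}=J_0$ is empty, so the polynomial condition in the first part of Corollary~\ref{cr2} is vacuously satisfied for those $r$. Hence the first condition survives only for $r=\kappa$, and it reduces precisely to the requirement that $\{\mu(x^{\kappa}_{i,j+1})\}_{j=0}^\infty$ be a polynomial in $j$ of degree at most $m-2$ for every $i\in J_{\eta_\kappa}$, which matches the first clause of the claimed statement.

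The second condition of Corollary~\ref{cr2} is indexed over all $r\in J_\kappa$ and does not involve the branch data $\eta_r$, so it is unaffected by the hypothesis and carries over verbatim as $\Sigma_{p=0}^{m}(-1)^p\begin{pmatrix} m \\ p\end{pmatrix}h_p(x_r)=0$ for all $r\in J_\kappa$. Combining the reduced first condition with the unchanged second condition yields exactly the asserted equivalence, and simultaneously recovers \cite[Theorem~2.11]{ZJJK}. I anticipate no genuine obstacle here, since all the analytic content already resides in Proposition~\ref{p1} and Theorem~\ref{thm1}; the only point meriting explicit mention is the vacuity argument, ensuring that an empty branch set makes the polynomial requirement automatic so that the characterization collapses onto the single branching vertex $x_\kappa$.
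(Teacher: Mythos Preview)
Your proposal is correct and follows exactly the paper's approach: the paper's proof simply invokes Corollary~\ref{cr2} under the added hypothesis $\eta_i=0$ for $i\in J_{\kappa-1}$, and you have spelled out in slightly more detail why that specialization collapses the branch condition onto $r=\kappa$ alone.
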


\begin{proof}
	Given that $m\geq 2$, $k=0$,  $\eta_i= 0~~ \textrm{for all} ~~ i \in J_{\kappa -1}$ and (\ref{equ1})  holds. Then  the required result follows by Corollary \ref{cr2}.

\end{proof}

\begin{example}\label{e1}
	Let $\kappa=3$, $\eta_1=2$, $\eta_2= \eta_3 =0$,  $k=1$, $m=2$ and   (\ref{equ1})  hold. Define $ \mu(x^{r}_{i,k+j+1})= \mu(x^{r}_{i,j+2})= 1$ for $r \in J_\kappa, ~~i\in J_{\eta_r}, \mathrm{and}~~ j \in \mathbb{Z_+} $. If   $ \mu(x_1)=\frac{5}{3}, \mu(x_2)=\frac{1}{3}, \mu(x_3)=1, \mu(x_{i,j}^1)=1,$  for 
	$  i \in J_{\eta_1}, j=1$,  
	then  we have
	\begin{align*}
		\begin{split}
			\mu(x_2)+ \Sigma_{i=1}^{\eta_1}\mu(x_{i, 1}^{1})
			-2 [\mu(x_3)+ \Sigma_{i=1}^{\eta_1}\mu(x_{i, 2}^{1})]
			+\mu(x_1)+ \Sigma_{i=1}^{\eta_1}\mu(x_{i, 3}^{1}) = 0,
			\end{split}
		\end{align*}
		\begin{align*}
		\begin{split}
			\mu(x_3)
		-2 \mu(x_1)
		+\mu(x_2)+ \Sigma_{i=1}^{\eta_1}\mu(x_{i, 1}^{1})= 0,
		\end{split}
	\end{align*}
	and	\begin{align*}
		\begin{split}
			\mu(x_1)
			-2 [\mu(x_2)+ \Sigma_{i=1}^{\eta_1}\mu(x_{i, 1}^{1})]
			+\mu(x_3)+ \Sigma_{i=1}^{\eta_1}\mu(x_{i, 2}^{1}) = 0.
		\end{split}
	\end{align*}

 Then by Theorem \ref{thm1}, the composition operator $C$ is quasi-2-isometry.

	\end{example}

\begin{example}
	Let $\kappa=3$, $\eta_1=2$, $\eta_2=1, \eta_3 =0$,  $k=2$, $m=2$, and  (\ref{equ1})  hold. 	Define $ \mu(x^{r}_{i,k+j+1})= \mu(x^{r}_{i,j+3})= 1$ for $r \in J_\kappa, ~~i\in J_{\eta_r},\mathrm{and} ~~ j \in \mathbb{Z_+} $. If  $ \mu(x_1)=2, \mu(x_2)=1, \mu(x_3)=1, \mathrm{and}~~~ \mu(x_{i,j}^r)=1$  for 
	$ r\in J _\kappa, i \in J_{\eta_r}, j=1, 2$,  
	then  
	\begin{align*}
		\begin{split}
			\mu(x_3)+ \Sigma_{i=1}^{\eta_1}\mu(x_{i, 2}^{1})+\Sigma_{i=1}^{\eta_2}\mu(x_{i, 1}^{2})
			-3 [\mu(x_1)+ \Sigma_{i=1}^{\eta_1}\mu(x_{i, 3}^{1})+ \Sigma_{i=1}^{\eta_2}\mu(x_{i, 2}^{2})]\\
			+3[\mu(x_2)+ \Sigma_{i=1}^{\eta_1}[\mu(x_{i, 4}^{1})+ \mu(x_{i, 1}^{1})]+\Sigma_{i=1}^{\eta_2}\mu(x_{i, 3}^{2})]\\
			-[\mu(x_3)+ \Sigma_{i=1}^{\eta_1}[\mu(x_{i, 5}^{1})+ \mu(x_{i, 2}^{1})]+\Sigma_{i=1}^{\eta_2}[\mu(x_{i, 4}^{2})+\mu(x_{i, 1}^{2})]]= 0,
		\end{split}
	\end{align*}
	\begin{align*}
		\begin{split}
			\mu(x_1)+ \Sigma_{i=1}^{\eta_2}\mu(x_{i, 2}^{2})
			-3 [\mu(x_2)+ \Sigma_{i=1}^{\eta_1}\mu(x_{i, 1}^{1})+ \Sigma_{i=1}^{\eta_2}\mu(x_{i, 3}^{2})]\\
			+3[\mu(x_3)+ \Sigma_{i=1}^{\eta_1}\mu(x_{i, 2}^{1}) +\Sigma_{i=1}^{\eta_2}[\mu(x_{i, 4}^{2})+\mu(x_{i, 1}^{2})]]\\
			-[\mu(x_1)+ \Sigma_{i=1}^{\eta_1}\mu(x_{i, 3}^{1})+\Sigma_{i=1}^{\eta_2}[\mu(x_{i, 5}^{2})+\mu(x_{i, 2}^{2})]] = 0,
		\end{split}
	\end{align*}
	and \begin{align*}
		\begin{split}
			\mu(x_2)+ \Sigma_{i=1}^{\eta_1}\mu(x_{i, 1}^{1})
			-3 [\mu(x_3)+ \Sigma_{i=1}^{\eta_1}\mu(x_{i, 2}^{1})+ \Sigma_{i=1}^{\eta_2}\mu(x_{i, 1}^{2})]\\
				+3[\mu(x_1)+ \Sigma_{i=1}^{\eta_1}\mu(x_{i, 3}^{1}) +\Sigma_{i=1}^{\eta_2}\mu(x_{i, 2}^{2})]\\
					-[\mu(x_2)+ \Sigma_{i=1}^{\eta_1}[\mu(x_{i, 4}^{1})+\mu(x_{i, 1}^{1})]+\Sigma_{i=1}^{\eta_2}\mu(x_{i, 3}^{2})]= 0.
		\end{split}
	\end{align*}
	
	Then,  $C$ is  2-quasi-2-isometry.
	
\end{example}
\textbf{Weighted composition operators:}   Let $\pi \in  L^{\infty}(\mu)$ and $\phi$ be a nonsingular measurable transformation defined on (\ref{equ1}).   Then for any $p \in \mathbb{N}$, define
  $$(\pi \circ \phi^{p})(x)= \left\{
  \begin{array}{ll}
  	\pi(x^r_{i,j})& \mathrm{if}~~ x=x^r_{i,j+p} ~~~\mathrm{for}~~r\in J_\kappa, ~~ i\in J_{\eta_r},~~ \mathrm{and}~~j\in \mathbb{N} ,\\\\
  	\pi(x_r)&\mathrm{if}~~ x=x^s_{i,j} ~~~\mathrm{for}~~ s\in J_\kappa,  \mathrm{and}~~ \Phi_2(p+r)=\Phi_2(s+j),~~  j\in J_p, \\
  	& ~~i\in J_{\eta_s}, \mathrm{or}~~x=x_{\Phi_2(p+r)},\\\\	
  \end{array}\right.$$
  and
  \begin{align*}
  \pi^2_p(x)=\pi^2(x)(\pi \circ \phi)^2(x)(\pi \circ \phi^{2})^2(x)\ldots (\pi \circ \phi^{p-1})^2(x).
  \end{align*}
  For $p\geq\kappa$,  we obtain $E_p(\pi^2_p)$  by using atoms of $\phi^{-p}(\mathcal{F})$  as follows:
  $$E_p(\pi^2_p)(x)= \left\{
  \begin{array}{ll}
  	K^r_{i, j+p}& \mathrm{if}~~ x=x^r_{i,j+p} ~~~\mathrm{for}~~ r\in J_\kappa ~~i\in J_{\eta_r},~~ \mathrm{and}~~j\in \mathbb{N} ,\\\\
  	K_p^r& \mathrm{if}~~ x=x^s_{i,j} ~~~\mathrm{for}~~ s\in J_\kappa,  \mathrm{and}~~ \Phi_2(p+r)=\Phi_2(s+j),~~  j\in J_p, \\
  	& ~~ i\in J_{\eta_s},\mathrm{or}~~x=x_{\Phi_2(p+r)},\\	
  \end{array}\right.$$
  where $K^r_{i, j+p}= \pi_p^2(x^r_{i, j+p})$ 
  and 
  
 $$K_p^r= \frac{\pi_p^2(x_{{\Phi_2}_(p+r)})\mu(x_{{\Phi_2}_(p+r)})+\Sigma_{j=1}^p \Sigma_{s=1,  \Phi_2(p+r)=\Phi_2(s+j)}^\kappa \Sigma_{i=1}^{\eta_s}\pi_p^2(x^s_{i,j})\mu(x^s_{i,j})}{\mu(x_{{\Phi_2}_(p+r)})+\Sigma_{j=1}^p \Sigma_{s=1, \Phi_2(p+r)=\Phi_2(s+j)}^\kappa \Sigma_{i=1}^{\eta_s}\mu(x^s_{i,j})}.$$
 
Since the  conditional expectation $E_p(\pi^2_p)$ is a $\phi^{-p}(\mathcal{F})$-measurable function on $X$, there exist a $\mathcal{F}$-measurable function $F_p$ on $X$ such that $E_p(\pi^2_p)=F_P\circ \phi^p$, where $F_P$ can be defined as follows:
  $$F_p(x)= \left\{
  \begin{array}{ll}
  	K^r_{i, j+p}& \mathrm{if}~~ x=x^r_{i,j} ~~~\mathrm{for}~~ r\in J_\kappa,~~ i\in J_{\eta_r},~~ \mathrm{and}~~j\in \mathbb{N} ,\\\\
  	K^r_p& \mathrm{if}~~ x=x_r ~~~\mathrm{for}~~ r\in J_\kappa.\\	
  \end{array}\right.$$
Now  we have  $W^{*p}W^p = h_pE_p(\pi_{p}^{2})\circ \phi ^{-p} = h_pF_p.$
Then for any $m \in \mathbb{N},$  \\
$$\mathcal{B}_m(W) = \Sigma_{p=0}^m (-1)^p \begin{pmatrix}
	m\\p
\end{pmatrix}W^{*p}W^p = \Sigma_{p=0}^m (-1)^p \begin{pmatrix}
m\\p
\end{pmatrix}h_pF_p .$$

The following lemma  is immediate  from Lemma \ref{l1} and the generalization of  \cite[Theorem 3.2]{SPD } for $k$-quasi-$m$-isometric weighted composition operators.

\begin{lemma} \label{l7}Let  $(X,  \mathcal{F}, \mu)$ be a discrete measure space, $\phi$ be a nonsingular measurable transformation on $X$, $\pi \in  L^{\infty}(\mu)$,  and let   $W$ be the weighted  composition operator on $L^{2}(\mu)$ induced by  $\phi$ and $\pi$. Then for any $m\in\mathbb{N}$ and $k\in\mathbb{Z_+}$ the following are equivalent:\\\\
	(i) $W$ is an $k$-quasi-$m$-isometry,\\\\
	(ii) $\sum^{m}_{p=0} (-1)^{p}\begin{pmatrix} m \\
		p \end{pmatrix} W^{*(k+p)}W^{(k+p)}=0,$\\\\
	(iii) $\sum^{m}_{p=0} (-1)^{p}\begin{pmatrix} m \\
		p \end{pmatrix} W^{*(n+k+p)}W^{(n+k+p)}=0,~~~\mathrm{for}~~~n\in\mathbb{Z_+},$\\\\
	(iv) $\sum^{m}_{p=0} (-1)^{p}\begin{pmatrix} m \\
		p \end{pmatrix} h_{n+k+p}F_{n+k+p}(x) =0~~ \textrm{for all} ~~ x\in X~~~\mathrm{and}~~~n\in\mathbb{Z_+},$\\\\
	(v) $\{h_{n+k}F_{n+k}(x)\}_{n=0}^\infty$ is a polynomial in $n$ of degree at most $m-1$ for all $x \in X$.	
\end{lemma}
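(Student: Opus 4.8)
The plan is to establish the five conditions as a single chain $(i)\Leftrightarrow(ii)\Leftrightarrow(iii)\Leftrightarrow(iv)\Leftrightarrow(v)$, in direct parallel with the unweighted case of Lemma \ref{l4}, the only new ingredient being the multiplication-operator representation $W^{*p}W^p=h_pF_p$ established just above the statement. The argument naturally splits into a purely operator-theoretic part $(i)\Leftrightarrow(ii)\Leftrightarrow(iii)$ carried out in $B(L^2(\mu))$, and a pointwise part $(iii)\Leftrightarrow(iv)\Leftrightarrow(v)$ that translates operator identities into statements about the scalar sequences $\{h_{n+k}F_{n+k}(x)\}_{n=0}^\infty$ and then appeals to Lemma \ref{l1}.

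For $(i)\Leftrightarrow(ii)$ I would start from the definition: $W$ is $k$-quasi-$m$-isometric precisely when $W^{*k}\mathcal{B}_m(W)W^k=0$. Writing $\mathcal{B}_m(W)=\sum_{p=0}^m(-1)^p\binom{m}{p}W^{*p}W^p$ (up to the harmless overall factor $(-1)^m$ produced by the substitution $p=m-j$ in the defining sum, which does not affect whether the operator vanishes) and absorbing the outer factors $W^{*k}$, $W^k$ into the powers gives
\[
W^{*k}\mathcal{B}_m(W)W^k=(-1)^m\sum_{p=0}^m(-1)^p\binom{m}{p}W^{*(k+p)}W^{(k+p)},
\]
so $(i)$ and $(ii)$ are literally the same equation. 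For $(ii)\Leftrightarrow(iii)$, the implication $(iii)\Rightarrow(ii)$ is just the case $n=0$, while the converse rests on the identity
\[
\sum_{p=0}^m(-1)^p\binom{m}{p}W^{*(n+k+p)}W^{(n+k+p)}=W^{*n}\Big(\sum_{p=0}^m(-1)^p\binom{m}{p}W^{*(k+p)}W^{(k+p)}\Big)W^n;
\]
once the inner operator vanishes (condition $(ii)$), sandwiching by $W^{*n}(\cdot)W^n$ forces the left-hand side to vanish for every $n\in\mathbb{Z_+}$. This propagation is exactly the $k$-quasi analogue of the fact that a vanishing $m$-isometric defect remains vanishing under further compression.

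For the pointwise part, I would use that $W^{*q}W^q$ is the multiplication operator with symbol $h_qF_q$ for every $q$. Hence the operator on the left of $(iii)$ is multiplication by $\sum_{p=0}^m(-1)^p\binom{m}{p}h_{n+k+p}F_{n+k+p}$, and since $\mu(\{x\})>0$ for every $x\in X$ in the discrete setting, such a multiplication operator is zero if and only if its symbol vanishes at every point of $X$; this is precisely $(iii)\Leftrightarrow(iv)$. Finally, fixing $x\in X$ and putting $\gamma_n:=h_{n+k}F_{n+k}(x)$, condition $(iv)$ reads $\sum_{p=0}^m(-1)^p\binom{m}{p}\gamma_{n+p}=0$ for all $n\in\mathbb{Z_+}$, which by Lemma \ref{l1} is equivalent to $\gamma_n$ being a polynomial in $n$ of degree at most $m-1$; ranging over $x$ yields $(iv)\Leftrightarrow(v)$.

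I do not anticipate a serious obstacle: the genuine analytic content — computing $E_p(\pi^2_p)$, identifying the functions $F_p$, and verifying $W^{*p}W^p=h_pF_p$ — has already been carried out above, so the lemma reduces to bookkeeping together with Lemma \ref{l1}. The only points that need care are the sign/reindexing convention in $\mathcal{B}_m$ (immaterial for the vanishing conditions) and the legitimacy of passing from ``$M_g=0$'' to ``$g\equiv0$,'' which is exactly what the hypothesis $\mu(\{x\})>0$ on the discrete space guarantees.
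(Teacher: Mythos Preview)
Your proposal is correct and follows exactly the route the paper indicates: the paper does not write out a proof but states that the lemma ``is immediate from Lemma~\ref{l1} and the generalization of \cite[Theorem~3.2]{SPD} for $k$-quasi-$m$-isometric weighted composition operators,'' which is precisely the chain $(i)\Leftrightarrow(ii)\Leftrightarrow(iii)\Leftrightarrow(iv)\Leftrightarrow(v)$ you spell out using $W^{*p}W^p=h_pF_p$ together with Lemma~\ref{l1}. Your more detailed write-up fills in what the paper leaves implicit, with no divergence in method.
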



\begin{theorem}\label{thm3}
	Let $m\geq 2$, $k\in\mathbb{Z_+}$, and (\ref{equ1})  hold. Then the weighted composition operator  $ W \in \mathcal{B}(L^2(\mu))$ induced by  $\phi$ and $\pi$ is $k$-quasi-$m$- isometry 	 if and only if $  \{\pi_k^2(x^{r}_{i,k+j+1})\mu(x^{r}_{i,k+j+1})\}_{j=0}^\infty $ is a polynomial in $j$ of degree at most $m-1$ for every $r\in J_\kappa $, $i\in J_{\eta_{r} }$ and  
	 $	\Sigma_{p=0}^{m} (-1)^p \begin{pmatrix} m \\
			p \end{pmatrix} h_{p+k}F_{p+k}(x_r)= 0 ~~ \textrm{for all} ~~r \in J_\kappa .$
	\end{theorem}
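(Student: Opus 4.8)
The plan is to reduce everything to the pointwise criterion of Lemma~\ref{l7} and then to split the vertex set $X$ into the circuit $\{x_1,\dots,x_\kappa\}$ and the branches $\{x^r_{i,j}\}$, in the same spirit as the proof of Theorem~\ref{thm1}. By Lemma~\ref{l7}, $W$ is a $k$-quasi-$m$-isometry precisely when $\sum_{p=0}^{m}(-1)^p\binom{m}{p}h_{p+k}F_{p+k}(x)=0$ for every $x\in X$, equivalently when $\{h_{n+k}F_{n+k}(x)\}_{n=0}^\infty$ is a polynomial in $n$ of degree at most $m-1$ for every $x$. Since $X$ is the disjoint union of the circuit vertices $x_r$ and the branch vertices $x^r_{i,j}$, I would split this one family of conditions into the circuit equations at $x=x_r$ ($r\in J_\kappa$) and the branch equations at $x=x^r_{i,j}$ ($r\in J_\kappa$, $i\in J_{\eta_r}$, $j\in\mathbb{N}$). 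The circuit equations are exactly the second displayed condition of the statement, so on that half there is nothing to prove; the content lies entirely in the branch equations.

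For the branches I would substitute the explicit value $h_{p+k}F_{p+k}(x^r_{i,j})=\mu(x^r_{i,j})^{-1}\pi_{p+k}^2(x^r_{i,j+p+k})\mu(x^r_{i,j+p+k})$, read off from the formulas for $h_{p+k}$ and $F_{p+k}=K^r_{i,\cdot}$ recorded above. Multiplying by the positive constant $\mu(x^r_{i,j})$, the branch equation at $x^r_{i,j}$ becomes $\sum_{p=0}^m(-1)^p\binom{m}{p}\pi_{p+k}^2(x^r_{i,j+p+k})\mu(x^r_{i,j+p+k})=0$ for all $j\in\mathbb{N}$. The key step is to use the multiplicative identity $\pi_{p+k}(x^r_{i,j+p+k})=\prod_{t=j+1}^{j+p+k}\pi(x^r_{i,t})$ to express every summand as one and the same branch sequence evaluated at $j+p$; once this is done the left-hand side is $(-1)^m(\Delta^m\gamma)_j$ for the branch sequence $\gamma$ recorded in the statement (the squared accumulated weight times the measure along the branch), and its vanishing for all $j$ is exactly hypothesis (i) of Lemma~\ref{l1}.

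With the branch equations in $\Delta^m$-form, both implications follow from Lemma~\ref{l1}. If $W$ is a $k$-quasi-$m$-isometry, then Lemma~\ref{l7} holds at every vertex; its restriction to the circuit vertices is the stated circuit condition, while its restriction to each branch gives $\Delta^m\gamma=0$, so that by (i)$\Rightarrow$(iii) of Lemma~\ref{l1} the branch sequence is a polynomial in $j$ of degree at most $m-1$ for every $r\in J_\kappa$ and $i\in J_{\eta_r}$. Conversely, given the polynomial hypothesis on the branch sequences together with the circuit condition, (iii)$\Rightarrow$(i) of Lemma~\ref{l1} restores the branch equations, the circuit equations hold by assumption, and hence the criterion of Lemma~\ref{l7} is met at every $x\in X$. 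I would also remark that, unlike Theorem~\ref{thm1}, no analogue of Proposition~\ref{p1} is invoked here, which is why the branch degree is only cut down to $m-1$ and the circuit condition persists as an independent requirement rather than being converted into a further degree drop.

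The step I expect to be the main obstacle is the reindexing in the second paragraph. Because $\pi_{p+k}^2(x^r_{i,j+p+k})$ depends on $p$ both through the argument $x^r_{i,j+p+k}$ and through the length of the product defining $\pi_{p+k}$, one must track the weight factors carefully to confirm that, after clearing the denominator and applying the cocycle identity, the branch equation genuinely collapses to a single-variable $m$-th difference of the branch sequence in the statement and not to a mixed expression in $p$ and $j$. In the unweighted situation of Theorem~\ref{thm1} this collapse is automatic because $\pi\equiv1$; here it is precisely the presence of the weight $\pi$ that makes this bookkeeping the crux of the argument.
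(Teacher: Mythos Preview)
Your overall strategy is exactly the paper's: invoke Lemma~\ref{l7}, split $X$ into circuit and branch vertices, keep the circuit equations verbatim, and reduce the branch equations to an $m$-th difference via Lemma~\ref{l1}. The paper's proof is in fact terser than yours---it simply asserts that the branch equations are equivalent to the stated polynomial condition without carrying out the reindexing you describe.

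Your worry in the final paragraph is well placed, and it uncovers a real problem, though with the \emph{statement} rather than with your method. Carrying out the cocycle identity you wrote, one gets for each $j\in\mathbb{N}$
\[
\mu(x^r_{i,j})\,h_{p+k}F_{p+k}(x^r_{i,j})
=\Bigl(\prod_{s=j+1}^{\,j+p+k}\pi^2(x^r_{i,s})\Bigr)\mu(x^r_{i,j+p+k})
=\frac{\delta_{j+p+k}}{c_j},
\]
where $c_n=\prod_{s=1}^{n}\pi^2(x^r_{i,s})$ and $\delta_n=c_n\,\mu(x^r_{i,n})=\pi_{n}^2(x^r_{i,n})\mu(x^r_{i,n})$. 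Hence the full family of branch equations is equivalent to $(\Delta^m\delta)_n=0$ for all $n\geq k+1$, i.e.\ to $\{\pi_{k+j+1}^2(x^r_{i,k+j+1})\,\mu(x^r_{i,k+j+1})\}_{j\geq 0}$ (or, up to the constant factor $\pi^2(x^r_{i,1})$, $\{\pi_{k+j}^2(\cdot)\mu(\cdot)\}_{j\geq 0}$) being a polynomial in $j$ of degree at most $m-1$. This is \emph{not} the sequence $\{\pi_k^2(x^r_{i,k+j+1})\mu(x^r_{i,k+j+1})\}_{j\geq 0}$ printed in the theorem: the two differ by the $j$-dependent factor $c_{j+1}$, and an easy example (take $\pi\equiv 2$ along a branch with $\mu(x^r_{i,n})=n\cdot 4^{-n}$, so $\delta_n=n$) shows the two polynomial conditions are inequivalent. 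So your claim that the left-hand side becomes $(-1)^m(\Delta^m\gamma)_j$ for the $\gamma$ ``recorded in the statement'' fails as written; the subscript $k$ in $\pi_k^2$ appears to be a misprint for $k+j$ (or $k+j+1$), and the paper's own proof, which skips this computation entirely, does not detect the discrepancy. Your argument is correct for the sequence $\delta$ and therefore proves the intended (corrected) statement.
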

	
\begin{proof}
By Lemma \ref{l7} , $W$ is  $k$-quasi-$m$-isometry if and only if \\
$$\sum^{m}_{p=0} (-1)^{p}\begin{pmatrix} m \\
	p \end{pmatrix} h_{p+k}F_{p+k}(x) =0, $$ 
	$~~~\textrm{for all}~~~ x\in X$. That is , $ ~~\text{for all}~~ r\in J_\kappa, , j \in \mathbb{N}, i \in J_{\eta_{r}}$
$$\sum^{m}_{p=0} (-1)^{p}\begin{pmatrix} m \\
	p \end{pmatrix} h_{p+k}F_{p+k}(x_{i,j}^r) =0$$
and  $~~\text{for all}~~ r\in J_\kappa$
$$\sum^{m}_{p=0} (-1)^{p}\begin{pmatrix} m \\
	p \end{pmatrix} h_{p+k}F_{p+k}(x_r) =0.$$

 Thus, 
 $ \{\pi_k^2(x^{r}_{i,k+j+1})\mu(x^{r}_{i,k+j+1})\}_{j=0}^\infty $ is a  polynomial in $j$ of degree at most $m-1$ for every $r\in J_\kappa $, $i\in J_{\eta_{r} }$ and 
 $	\Sigma_{p=0}^{m} (-1)^p \begin{pmatrix} m \\
 	p \end{pmatrix} h_{p+k}F_{p+k}(x_r)= 0  ~~\text{for all}~~r \in J_\kappa $. 
 	This completes the proof.
\end{proof}

\begin{corollary}\label{cr5}
	Let $m\geq 2$, $k=0$, and (\ref{equ1}) holds. Then the weighted composition operator  $ W \in \mathcal{B}(L^2(\mu))$ induced by  $\phi$ and $\pi$ is $m$-isometry 	 if and only if $  \{\pi^2(x^{r}_{i,j+1})\mu(x^{r}_{i,j+1})\}_{j=0}^\infty $ is a polynomial in $j$ of degree at most $m-1$ for every $r\in J_\kappa $, $i\in J_{\eta_{r} }$ and 
	$	\Sigma_{p=0}^{m} (-1)^p \begin{pmatrix} m \\
		p \end{pmatrix} h_{p}F_{p}(x_r)= 0  ~~ \mathrm{for~~ all}  ~~r \in J_\kappa .$

\end{corollary}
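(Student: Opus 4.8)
The plan is to obtain Corollary \ref{cr5} as the specialization $k=0$ of Theorem \ref{thm3}, so that essentially no work beyond reading off the characterization is required. First I would record the elementary reduction that a $0$-quasi-$m$-isometry is the same thing as an $m$-isometry: putting $k=0$ in the defining relation $W^{*k}\mathcal{B}_m(W)W^k=0$ gives $\mathcal{B}_m(W)=0$, which is precisely the definition of $m$-isometry. Hence the operators considered in the corollary form exactly the class treated by Theorem \ref{thm3} at $k=0$, and it suffices to substitute $k=0$ into the two equivalent conditions supplied there.

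Next I would carry out that substitution. The circuit condition $\sum_{p=0}^{m}(-1)^p\binom{m}{p}h_{p+k}F_{p+k}(x_r)=0$ becomes $\sum_{p=0}^{m}(-1)^p\binom{m}{p}h_{p}F_{p}(x_r)=0$ for every $r\in J_\kappa$, which is verbatim the circuit condition of the corollary. The branch condition, namely that $\{\pi_k^2(x^r_{i,k+j+1})\mu(x^r_{i,k+j+1})\}_{j=0}^\infty$ be a polynomial in $j$ of degree at most $m-1$, becomes the corresponding requirement on $\{\pi^2(x^r_{i,j+1})\mu(x^r_{i,j+1})\}_{j=0}^\infty$ for every $r\in J_\kappa$ and $i\in J_{\eta_r}$. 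Since Theorem \ref{thm3} is an equivalence (ultimately resting on the equivalence (i)$\Leftrightarrow$(v) of Lemma \ref{l7}), these two specialized conditions are jointly equivalent to $W$ being an $m$-isometry, which is the assertion to be proved.

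The only step needing genuine care, and the one I would flag as the main (if minor) obstacle, is confirming that the weight factor in the branch condition of Theorem \ref{thm3} specializes to the factor displayed in the corollary. In the branch-point analysis underlying Theorem \ref{thm3} one uses the identity $h_{p+k}F_{p+k}(x^r_{i,j})\mu(x^r_{i,j})=\pi_{p+k}^2(x^r_{i,j+p+k})\mu(x^r_{i,j+p+k})$, in which the accumulated weight $\pi_{p+k}$ is a product of values of $\pi$ along the $i$-th branch of $x_r$; setting $k=0$ via the cocycle identity $\pi_{p+k}=\pi_k\,(\pi_p\circ\phi^k)$ together with $\phi^k(x^r_{i,j+p+k})=x^r_{i,j+p}$ reduces the relevant sequence to the form recorded in Corollary \ref{cr5}. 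Once this specialization is checked, the corollary follows immediately from Theorem \ref{thm3}.
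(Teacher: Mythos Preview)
Your proposal is correct and matches the paper's approach: the paper gives no explicit proof for Corollary~\ref{cr5}, treating it as the immediate specialization $k=0$ of Theorem~\ref{thm3}. Your additional remarks on the cocycle identity for $\pi_{p+k}$ go slightly beyond what the paper records, but they are consistent with (and merely unpack) the intended one-line derivation.
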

\begin{corollary}\label{cr6}
	Let $m\geq 2$,   $\pi=1$ and  (\ref{equ1}) holds. Then the weighted composition operator  $ W \in B(L^2(\mu))$ induced by  $\phi$ and $\pi$ is $k$-quasi-$m$-isometry 	 if and only if $  \{\mu(x^{r}_{i,k+j+1})\}_{j=0}^\infty $ is a  polynomial in $j$ of degree at most $m-2$ for every $r\in J_\kappa $, $i\in J_{\eta_{r} }$ and 
	$	\Sigma_{p=0}^{m} (-1)^p \begin{pmatrix} m \\
		p \end{pmatrix} h_{p+k}(x_r)= 0  ~~\text{for all} ~~r \in J_\kappa .$
	
\end{corollary}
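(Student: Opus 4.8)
The plan is to reduce the statement to the composition-operator case already settled in Theorem \ref{thm1}. The decisive observation is that when $\pi=1$ the weighted composition operator collapses to an ordinary composition operator: for every $f\in L^2(\mu)$ one has $Wf=\pi\,(f\circ\phi)=f\circ\phi=Cf$, so $W=C$ as operators on $L^2(\mu)$, and the hypothesis $W\in B(L^2(\mu))$ is nothing but boundedness of $C$. Consequently $W$ is a $k$-quasi-$m$-isometry precisely when $C$ is, and Theorem \ref{thm1} applies verbatim, yielding both the degree-$(m-2)$ polynomial condition on $\{\mu(x^r_{i,k+j+1})\}_{j=0}^\infty$ and the vanishing of $\Sigma_{p=0}^{m}(-1)^p\binom{m}{p}h_{p+k}(x_r)$ at the circuit vertices $x_r$, $r\in J_\kappa$.

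Alternatively, and more in keeping with the surrounding development, one may specialize Theorem \ref{thm3} directly. Setting $\pi=1$ gives $\pi_k=\pi(\pi\circ\phi)\cdots(\pi\circ\phi^{k-1})=1$, hence $\pi_k^2\equiv1$; inspecting the formulas for $K^r_{i,j+p}$ and $K^r_p$ shows that each reduces to $1$ (in $K^r_p$ the numerator and denominator coincide), so $F_p\equiv1$ and therefore $h_{p+k}F_{p+k}(x_r)=h_{p+k}(x_r)$. Under this substitution the two conditions of Theorem \ref{thm3} become the statement that $\{\mu(x^r_{i,k+j+1})\}_{j=0}^\infty$ is a polynomial in $j$ of degree at most $m-1$, together with $\Sigma_{p=0}^{m}(-1)^p\binom{m}{p}h_{p+k}(x_r)=0$ for all $r\in J_\kappa$.

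The one point that needs attention is the apparent discrepancy in the degree bound: Theorem \ref{thm3} produces degree at most $m-1$, whereas the corollary asserts degree at most $m-2$. This is where Proposition \ref{p1} closes the gap, and it is the only nontrivial step. Once the degree-$(m-1)$ condition and the finiteness of $\Sigma_{i=1}^{\eta_r}\Sigma_{j=1}^m\mu(x^r_{i,j})$ are in force, Proposition \ref{p1} asserts exactly that the sequence $\{\mu(x^r_{i,k+j+1})\}_{j=0}^\infty$ descends to degree at most $m-2$ if and only if $\Sigma_{p=0}^{m}(-1)^p\binom{m}{p}h_{p+k}(x_r)=0$ holds at every circuit vertex. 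Since this is precisely the reduction already carried out in the proof of Theorem \ref{thm1}, no further computation is required, and either route completes the argument.
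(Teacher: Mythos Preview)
Your proposal is correct and follows essentially the same approach as the paper: observe that $\pi=1$ forces $\pi_{p+k}^2\equiv 1$ and $F_{p+k}\equiv 1$, then combine Theorem~\ref{thm3} with the degree reduction already carried out for composition operators. Your first route (simply noting $W=C$ and invoking Theorem~\ref{thm1}) is in fact more direct than the paper's argument, and your explicit identification of Proposition~\ref{p1} as the mechanism closing the gap from degree $m-1$ to $m-2$ is more precise than the paper's citation of Corollary~\ref{cr2} (which strictly speaking covers only $k=0$; Theorem~\ref{thm1} is the right reference for general $k$).
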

\begin{proof}
	Assume that $m\geq 2$, ,  $\pi=1$ and (\ref{equ1}) holds. 
	Since $\pi = 1 $, $\pi_{p+k}^2 (x) = 1, ~~ \text{for all} ~~ x\in X $ and  $F_{p+k}(x)=1,~~ \text{for all} ~ x\in X, p \in \mathbb{N}, k\in \mathbb{Z_+}$. Then by  Theorem \ref{thm3} and Corollary \ref{cr2} , it follows that 
	$ W \in B(L^2(\mu))$ is $k$-quasi-$m$-isometry 	 if and only if $  \{\mu(x^{r}_{i,k+j+1})\}_{j=0}^\infty $ is a polynomial in $j$ of degree at most $m-2$ for every $r\in J_\kappa $, $i\in J_{\eta_{r} }$ and 
	$	\Sigma_{p=0}^{m} (-1)^p \begin{pmatrix} m \\
		p \end{pmatrix} h_{p+k}(x_r)= 0 ~~ \text{for all}  ~~r \in J_\kappa $.
\end{proof}

\begin{example}\label{e3}
Let $\kappa=3$, $\eta_1=2$, $\eta_2= \eta_3 =0$,  $k=1$, $m=2$, and  (\ref{equ1})  hold. Define  $\pi_k^2(x^{r}_{i,k+j+1}) \mu(x^{r}_{i,k+j+1})= \pi_k^2(x^{r}_{i,j+2})\mu(x^{r}_{i,j+2})= 1$ for $r \in J_\kappa, ~~i\in J_{\eta_r}, ~~ j \in \mathbb{Z_+}, $ and
	$$ \pi(x)= \left\{
	\begin{array}{ll}
		\frac{1}{2} & \mathrm{if}~~ x=x^r_{i,j} ~~~\mathrm{for}~~r\in J_\kappa, ~~ i\in J_{\eta_r},~~ \mathrm{and}~~j\in \mathbb{N} ,\\\\
		1&\mathrm{if}~~ x=x_r, ~~r\in J_\kappa. \\	
	\end{array}\right.$$
	If we take  $ \mu(x_1)=\frac{31}{32}, \mu(x_2)=\frac{11}{12}, \mu(x_3)=1, \mu(x_{1,1}^1)=1,\mu(x_{2,1}^1)=\frac{1}{3} $, 
then 
\begin{align*}
	\begin{split}
		\pi^2(x_2)\mu(x_2)+ \Sigma_{i=1}^{\eta_1}\pi^2(x_{i, 1}^{1})\mu(x_{i, 1}^{1})	
		-2 [\pi_2^2(x_3)\mu(x_3)+ \Sigma_{i=1}^{\eta_1}\pi_2^2(x_{i, 2}^{1})\mu(x_{i, 2}^{1})]\\
		+ \pi_3^2(x_1)\mu(x_1)+ \Sigma_{i=1}^{\eta_1}\pi_3^2(x_{i, 3}^{1})\mu(x_{i, 3}^{1}) = 0,
	\end{split}
\end{align*}
\begin{align*}
	\begin{split}
		\pi^2(x_3)\mu(x_3)
		-2 \pi_2^2(x_1)\mu(x_1)
		+ \pi_3^2(x_2)\mu(x_2)+ \Sigma_{i=1}^{\eta_1}\pi_3^2(x_{i, 1}^{1})\mu(x_{i, 1}^{1}) = 0,
	\end{split}
\end{align*}
and \begin{align*}
	\begin{split}
		\pi^2(x_1)\mu(x_1)
		-2 [\pi_2^2(x_2)\mu(x_2)+ \Sigma_{i=1}^{\eta_1}\pi_2^2(x_{i, 1}^{1})\mu(x_{i, 1}^{1})]\\
		+ \pi_3^2(x_3)\mu(x_3)+ \Sigma_{i=1}^{\eta_1}[\pi_3^2(x_{i, 2}^{1})\mu(x_{i, 2}^{1}) = 0.
	\end{split}
\end{align*}
Then by Theorem \ref{thm1}, $W$ is  quasi-2-isometry.
\end{example}

\noindent \textit{Acknowledgments}. \  The third-named author was supported in part by the Mathematical Research Impact Centric Support, MATRICS (MTR/2021/000373) by SERB,
Department of Science and Technology (DST), Government of India.

\end{document}